\newcommand{\R}{\mathbb{R}}
\newcommand{\Z}{\mathbb{Z}}
\newcommand{\C}{\mathbb{C}}
\newcommand{\co}{\colon\thinspace}
\providecommand{\coloneqq}{\mathrel{\mathop:}=}
\newcommand{\restr}{\ensuremath{\thinspace\vrule\thinspace}}
\theoremstyle{plain}
\newtheorem{theorem}{Theorem}[section]
\newtheorem{corollary}[theorem]{Corollary}
\newtheorem{proposition}[theorem]{Proposition}
\theoremstyle{definition}
\newtheorem{definition}[theorem]{Definition}
\newtheorem{remark}[theorem]{Remark}
\newenvironment{acknowledgements}{\bigskip\noindent\textbf{Acknowledgements.}}{}
\title{Knots and links of complex tangents}
\author{Naohiko Kasuya \and Masamichi Takase}
\address{Naohiko Kasuya: 
School of Social Informatics, 
Aoyama Gakuin University, 5-10-1 Fuchinobe, Chuo-ku, 
Sagamihara, Kanagawa 252-5258, Japan.}
\email{nkasuya@si.aoyama.ac.jp}%
\address{Masamichi Takase: 
Faculty of Science and Technology,
Seikei University, 3-3-1 Kichijoji-kitamachi, Musashino,
Tokyo 180-8633, Japan.} 
\email{mtakase@st.seikei.ac.jp}%
\date{}
\keywords{complex tangent, totally real, 
embedding, singularity, stable map, 
knot, link, Thom polynomial, 3-manifold, band surgery, nullification}
\subjclass[2000]{Primary 32V40, 
57M25; 
Secondary 57R45, 
57R40, 
53C40 
}
\begin{document}\sloppy
\maketitle
\begin{abstract}
It is shown that every knot or link is the set of complex tangents of a
$3$-sphere smoothly embedded in the three-dimensional complex space. 
We show in fact that a one-dimensional submanifold of 
a closed orientable $3$-manifold 
can be realised as the set of complex tangents 
of a smooth embedding of the $3$-manifold 
into the three-dimensional complex space if and only if 
it represents the trivial integral 
homology class in the $3$-manifold. 
The proof involves a new application of 
singularity theory of differentiable maps. 
\end{abstract}
\section{Introduction}\label{sect:intro}
An immersion $f$ of a $C^\infty$-smooth manifold $M$ 
into the complex space $\C^n$ is said to be \textit{totally real}  
if $df_x(T_xM)\cap J(df_x(T_xM))=\{0\}$ for 
each point $x\in M$ and 
the complex structure $J$. 
If, on the contrary, 
$df_x(T_xM)$ contains a complex line, 
such a point $x$ is said to be 
\textit{a complex tangent}. 
Totally real immersions and embeddings 
have long been important topics in differential geometry (see e.\,g.\ \cite{MR787894,MR966952,MR880125,MR864505}).
The behaviour of complex tangents is 
also apparently interesting and has been extensively  
studied (see e.\,g.\ \cite{MR0200476,MR1360632,MR2928578,Ali_pre,MR1177310,MR0314066,MR800003}). 

In this paper we show that 
a $1$-dimensional submanifold $L$ 
of a closed orientable $3$-manifold $M^3$ 
can be realised as the set of complex tangents 
of a $C^\infty$-smooth embedding $M^3$ into $\C^3$ 
if and only if the homology class $[L]$ vanishes in $H_1(M^3;\Z)$. 
Ali M.~Elgindi has obtained, 
in his pioneering paper \cite{MR2928578}, 
a similar result mainly for a knot in the $3$-sphere $S^3$, 
namely in the case where $L$ is a single circle and 
$M^3=S^3$, 
in which, however, the embedding of $S^3$ into $\C^3$ 
ought to have a degenerate point and 
cannot be taken to be $C^\infty$-smooth 
(see also \cite{Ali_pre}). 
In his argument, 
Akbulut and King's result \cite{MR639356} 
has played a crucial role to relate 
the two seemingly unrelated objects 
--- geometry of complex tangents and topology of knots. 
Our approach is quite different; we employ instead 
Saeki's theorem on singularities of stable maps 
in the spirit of differential topology. 
This enables us to avoid 
dealing with the degeneracy, 
and to study knots and links in a general 
orientable $3$-manifold. 

A stable map between manifolds, 
which we will define later in terms of 
the conditions of local forms,  
is a $C^\infty$-smooth map which differs from neighbouring maps 
in the mapping space only by diffeomorphisms 
of the source and target manifolds. 
The notion of a stable map 
can be naturally regarded as a high-dimensional 
variant of a Morse function 
and has attracted attention as a tool 
to analyse the topology of a manifold. 
We especially focus on \textit{liftable} stable maps
from $3$-manifolds to the plane, 
that is, those stable maps which can factor through immersions into $\R^4$, 
and reveal that stable maps are useful to study the geometry 
of a real submanifold in a complex space. 

Our proof is not complicated; it contains 
two main ideas --- 
a refinement of Saeki's theorem claiming 
that any integrally null-homologous link is the singular set of a liftable 
stable map to the plane (Theorem~\ref{thm:ext}) 
and a gimmick to lift the stable map 
into an immersion in $\C^3$
whose complex tangents form the given link 
(Theorem~\ref{thm:imm}). 
At the final step, 
with the aid of a totally real version of Whitney's trick 
(Gromov \cite{MR864505} and Forstneri{\v{c}} \cite{MR880125}), 
we eliminate double points of the immersion, so as to 
obtain the main theorem (Theorem~\ref{thm:main}). 

Our argument focusing on the liftability gives back to 
an interesting corollary on stable maps (Corollary~\ref{cor:byprod}). 
Namely, we show that 
the singular set of a liftable stable map from a closed 
orientable $3$-manifold $M^3$ to the plane represents 
the trivial integral homology class in $H_1(M^3;\Z)$. 
This can be regarded as a refined version of the 
well-known Thom polynomial \cite{MR0087149} stating that 
the singular set of a stable map from a closed orientable 
$3$-manifold to the plane represents 
the trivial $\Z/2\Z$-coefficient homology class. 

In what follows, the term 
``$C^\infty$-smooth'' will be referred to simply as 
``smooth''. 
All manifolds and maps between manifolds shall be supposed 
to be smooth, unless otherwise stated. 

\section{Complex tangents} 
Let $f\co M^k\to\C^n$
be a smooth immersion. 
As mentioned in \S\ref{sect:intro}, 
a point $x\in M^k$ is said to be 
\textit{a complex tangent} if 
$df_x(T_xM^k)$ contains a complex line, that is, 
\[
df_x(T_xM^k)\cap J(df_x(T_xM^k))\ne\{0\}
\]
holds. If $f$ has no complex tangents 
it is said to be \textit{totally real}. 

We deal mainly with embeddings 
of closed orientable $3$-manifolds into $\C^3$. 
According to Lai~\cite[Theorem~2.3]{MR0314066}
(see also \cite[Proposition~(2.1)]{MR800003} and \cite[Proposition~3]{{MR2928578}}), 
for a smooth generic immersion of $M^3$ into $\C^3$, 
the set of complex tangents is empty or forms 
a codimension two submanifold of $M^3$. 

On the other hand, 
based on the $h$-principle due to Gromov \cite{MR864505}, 
it has been shown 
\cite{MR966952,MR880125} that any compact orientable $3$-manifold 
admits a totally real embedding in $\C^3$. 
More precisely, any immersion $f\co M^3\to\C^3$ 
of a compact orientable $3$-manifold $M^3$ is 
regularly homotopic to a totally real immersion, 
and moreover, 
if $f$ is regularly homotopic to an embedding 
then it is regularly homotopic to a totally real embedding. 
This implies, in a sense, that the existence 
of complex tangents is not 
an obstruction to totally reality. 
Therefore, our interests are rather in 
their global behaviours. 

Elgindi has initiated the study of topology of 
complex tangents of embeddings of the $3$-sphere 
in a series of papers \cite{MR2928578,MR3345506,Ali_pre}. 
In addition to the result mentioned in \S\ref{sect:intro}, 
he has shown in \cite{Ali_pre} that 
for any given knot $K$, an embedded circle in $S^3$, 
there exists a smooth embedding of 
$S^3$ into $\C^3$
with complex tangents forming a knot isotopic to $K$ 
\textit{or} 
a $2$-component link isotopic to two unlinked copies of $K$. 
The latter case,  however, cannot be excluded 
and after all it seems that 
the argument is facing a difficult trade-off. 
The problems of resolving this and 
dealing with knots and links in a general $3$-manifold 
are posed at the end of \cite{Ali_pre}. 
We will offer satisfactory solutions to these problems 
in \S\ref{sect:main}. 

\bigskip

Regarding the set of complex tangents, 
our first observation is the following. 

\begin{theorem}\label{thm:CT-homology}
For a smooth generic immersion of a 
closed orientable $3$-manifold $M^3$ into $\C^3$, 
the integral homology class represented by the set of complex tangents 
vanishes in $H_1(M^3;\Z)$. 
\end{theorem}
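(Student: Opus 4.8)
The plan is to realise the set $L$ of complex tangents as the zero locus of a section of a complex \emph{line} bundle over $M^3$, and then read off $[L]$ from the first Chern class of that bundle.

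First I would set up the relevant bundle homomorphism. Since $M^3$ is closed and orientable it is parallelizable, so $TM^3$ is a trivial real $3$-plane bundle and hence its complexification $E\coloneqq\C\otimes_\R TM^3$ is a trivial complex vector bundle of rank $3$. The differential of an immersion $f\co M^3\to\C^3$ is a fibrewise injective real bundle homomorphism $df\co TM^3\to f^*T\C^3=M^3\times\C^3$; extending it $\C$-linearly in each fibre yields a complex bundle homomorphism $\Phi\co E\to M^3\times\C^3$, $\Phi_x(z\otimes v)=z\,df_x(v)$, whose image over $x$ is exactly the complex span $df_x(T_xM^3)+J(df_x(T_xM^3))$ of the tangent $3$-plane.

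Second comes a short piece of linear algebra. For a real $3$-plane $P\subset\C^3$ the intersection $P\cap JP$ is a $J$-invariant real subspace of dimension at most $3$, hence is either $\{0\}$ or a complex line; therefore $P$ contains a complex line precisely when $P+JP\subsetneq\C^3$, i.e.\ precisely when $\Phi_x$ (applied to $P=df_x(T_xM^3)$) fails to be surjective. Since the source and target of $\Phi$ are both of complex rank $3$, non-surjectivity of $\Phi_x$ is the vanishing of $\det{}_\C\Phi\coloneqq\Lambda^3_\C\Phi$, a section of the complex line bundle $\mathcal{L}\coloneqq\mathrm{Hom}_\C\bigl(\Lambda^3_\C E,\ \Lambda^3_\C(M^3\times\C^3)\bigr)$, and $\mathcal{L}$ is trivial because $E$ is. For a generic immersion this section is transverse to the zero section (which is essentially the content of the quoted normal form of Lai), so $L$ is a closed codimension-two submanifold with normal bundle $\mathcal{L}|_L$, naturally cooriented by the complex structure of $\mathcal{L}$. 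Its class $[L]\in H_1(M^3;\Z)$ is then Poincar\'e dual to $c_1(\mathcal{L})=0$, whence $[L]=0$.

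The bundle-theoretic steps are formal; the one point deserving care is the passage between the paper's notion of a ``generic immersion'' and transversality of $\det_\C\Phi$ to the zero section, which I would settle by the standard parametric (jet) transversality argument on the space of immersions --- the same mechanism underlying Lai's local normal form. I expect this genericity bookkeeping, rather than the characteristic-class computation, to be the main (and only mild) obstacle. A pleasant feature is that the argument uses nothing about $f$ beyond being a generic immersion, so the same computation will apply verbatim to the immersions constructed later in the paper.
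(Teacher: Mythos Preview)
Your argument is correct, and it is genuinely different from the paper's. The paper works with the Gauss map $\Gamma_f\co M^3\to G_{6,3}$ and the codimension-two locus $W\subset G_{6,3}$ of non--totally-real $3$-planes; the key input is the cited theorem (Audin, Forstneri\v{c}, via Gromov's $h$-principle) that any immersion of a closed orientable $3$-manifold in $\C^3$ is regularly homotopic to a totally real one, which homotopes $\Gamma_f$ off $W$ and thereby produces an explicit null-cobordism of $\Gamma_f^{-1}(W)=L$ inside $M^3\times[0,1]$. You instead exploit parallelizability of $M^3$ directly: the determinant line bundle $\mathcal{L}$ is trivial, so $[L]$, being Poincar\'e dual to $c_1(\mathcal{L})$, vanishes. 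Your route is more self-contained --- it avoids invoking the $h$-principle for totally real embeddings, which is a nontrivial black box --- and it makes transparent that the obstruction is a Chern class that happens to vanish in dimension three. The paper's route, on the other hand, applies verbatim to any situation in which one already knows a totally real representative exists in the regular homotopy class, without needing the tangent bundle to be stably trivial; it also foreshadows the later use of totally real embeddings in the main theorem. Your caveat about matching the paper's notion of ``generic'' with transversality of $\det_\C\Phi$ is well placed, but it is indeed a routine jet-transversality statement equivalent to the transversality of $\Gamma_f$ to $W$ that the paper assumes.
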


\begin{proof}
Denote by $G_{6,3}$ the Grassmann manifold 
of $3$-planes in $\R^6=\C^3$. 
Let $G_{6,3}^{\mathrm{TR}}$ be 
the open subset of $G_{6,3}$ consisting of 
totally real $3$-planes and 
put $W\coloneqq G_{6,3}\smallsetminus G_{6,3}^{\mathrm{TR}}$, 
which turns out to be a codimension two closed 
orientable submanifold 
of $G_{6,3}$ (see \cite[\S2 and Theorem~3]{MR3345506}). 

To a smooth immersion $f\co M^3\to\C^3$, 
we associate the Gauss map 
$\Gamma_f\co M^3\to G_{6,3}$ defined by 
$\Gamma_f(p)=df(T_pM^3)$. 
For a generic $f$, the Gauss map 
$\Gamma_f$ becomes a continuous map 
transverse to $W$ and 
the set of complex tangents of $f$ is 
just the codimension two closed orientable 
submanifold $\Gamma_f^{-1}(W)$ of $M^3$. 

As mentioned above, 
$f$ is regular homotopic to a 
totally real immersion \cite{MR966952,MR880125}. This implies 
that there exists a homotopy from 
$\Gamma_f$ to a map $M^3\to G_{6,3}$ 
with image inside $G_{6,3}^{\mathrm{TR}}$. 
Such a homotopy determines the map 
\[
\widetilde\Gamma_f\co M^3\times[0,1]\to G_{6,3}
\]
such that $\widetilde\Gamma_f$ restricted to $M^3\times\{0\}$ 
coincides with $\Gamma_f$ and 
$\widetilde\Gamma_f(M^3\times\{1\})\cap W=\emptyset$. 
By a small perturbation (fixed on $M^3\times\{0\}$)
if necessary, we can make 
$\widetilde\Gamma_f$ transverse to $W$. 
Then, the inverse image $\widetilde\Gamma_f^{-1}(W)$ 
gives an orientable submanifold bounded by 
$\Gamma_f^{-1}(W)$ in $M^3$, which implies that 
the set of complex tangents of $f$ is null-homologous in $M^3$. 
\end{proof}

\begin{remark}
The similar statement in the case 
of homology with coefficients in $\R$ 
has been proved in \cite{MR1360632,MR0314066}. 
\end{remark}

\section{Knots and links in $3$-manifolds}
The coherent band surgery for links in $\R^3$, which 
is equivalent to the move called \textit{nullification} or 
a sort of \textit{rational tangle surgery}, 
has been extensively studied 
particularly in relation with the study of 
enzyme actions on DNA (see \cite{MR1068451} for example). 
We consider here the notion of coherent band surgery 
for links in a general $3$-manifold $M^3$, 
that is, copies of circles embedded in $M^3$. 

\begin{figure}
\includegraphics[width=\textwidth]{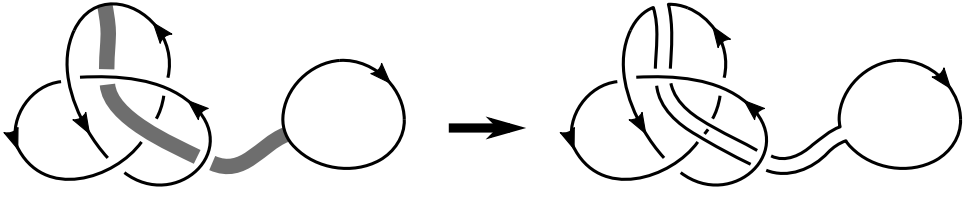}
\caption{The coherent band surgery}\label{fig:band}
\end{figure}

\begin{definition}
Let $L$ be a $1$-dimensional submanifold of 
$n$-manifold $M$ and $b\co I\times I\to M^n$
an embedding such that
$b(I\times I)\cap L=b(I\times\partial I)$, 
where $n\ge3$ and $I=[0,1]$. Then, 
\[
L'=(L\smallsetminus b(I\times\partial I))\cup b(\partial I\times I)
\]
is said to be the link obtained $L$ by
\textit{the band surgery} along the band $b$. 
If $L$ is an oriented link and $L'$ has 
the orientation compatible with 
$L\smallsetminus b(I\times\partial I)$, 
the link $L'$ is said to be obtained by
\textit{the coherent band surgery} (see Figure~\ref{fig:band}). 
\end{definition}

The following proposition has been 
implicitly proved in \cite[Lemma~3.9]{MR1359844}. 

\begin{figure}
\includegraphics[width=\textwidth]{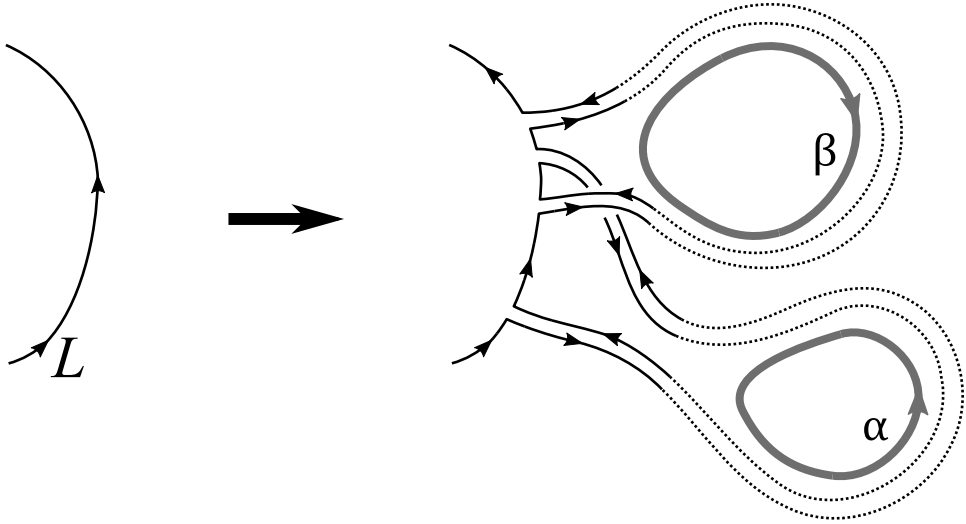}
\caption{The commutator through coherent band surgeries}\label{fig:commutator}
\end{figure}

\begin{figure}
\includegraphics[width=\textwidth]{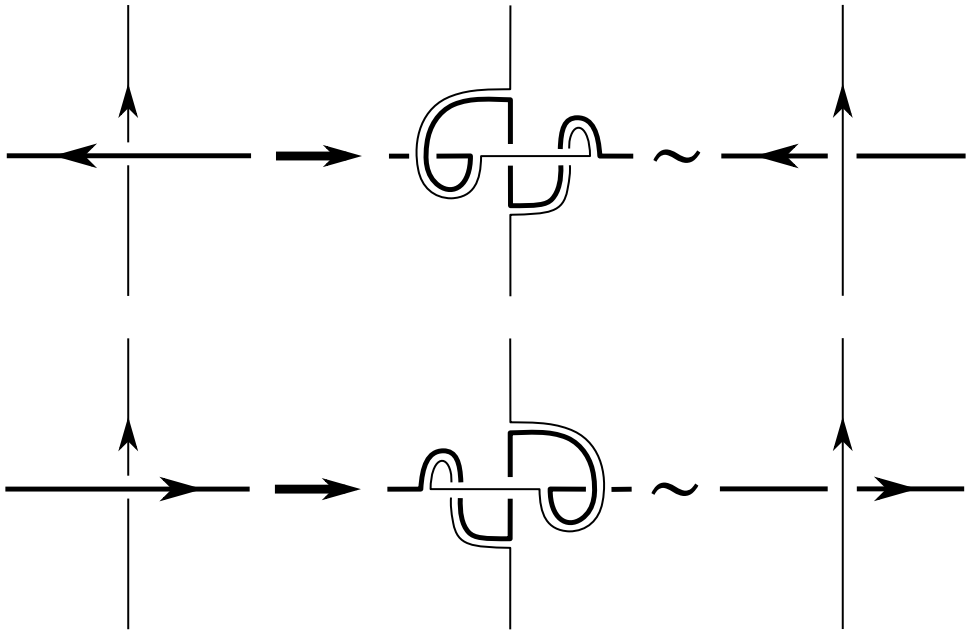}
\caption{The unknotting operation through coherent band surgeries}\label{fig:unknotting}
\end{figure}

\begin{proposition}\label{prop:coherent}
Let $L$ and $L'$ are closed oriented $1$-dimensional submanifolds 
of a closed $n$-dimensional manifold $M$ where $n\ge3$. Then, 
the homology class 
$[L]$ is equal to $[L']$ in $H_1(M;\Z)$ if and only if 
$L'$ is isotopic to a $1$-dimensional oriented submanifold 
obtained from $L$ by a finite iteration of 
coherent band surgeries. 
\end{proposition}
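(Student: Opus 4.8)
The plan is to prove both directions separately, with the ``only if'' direction being essentially trivial and the ``if'' direction requiring the real work.

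For the ``only if'' direction: a single coherent band surgery does not change the integral homology class. Indeed, if $L' = (L\smallsetminus b(I\times\partial I))\cup b(\partial I\times I)$ is obtained from $L$ by a coherent band surgery, then $L$ and $L'$ agree outside the band $b(I\times I)$, and inside the band they differ by replacing $b(I\times\partial I)$ (two opposite sides of the square) by $b(\partial I\times I)$ (the other two sides). With the coherent orientation convention, the oriented $1$-cycle $L - L'$ is the oriented boundary of the disc $b(I\times I)$, hence null-homologous. So $[L]=[L']$ in $H_1(M;\Z)$, and this is preserved under any finite iteration of such surgeries and under isotopy. Therefore $[L]=[L']$ whenever $L'$ is isotopic to something obtained from $L$ by finitely many coherent band surgeries.

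For the ``if'' direction, suppose $[L]=[L']$ in $H_1(M;\Z)$. The strategy is to reduce to the case $L'=\emptyset$ (or a fixed baseline) by showing that any integrally null-homologous oriented link can be converted to the empty link by coherent band surgeries; then, running this for both $L$ and $L'$ and reversing one of the sequences (coherent band surgery is a symmetric relation, since the inverse of a coherent band surgery is again a coherent band surgery along essentially the same band), we connect $L$ to $L'$. To carry out the reduction, first use that $[L]=0$ means $L$ bounds an oriented surface, or more concretely, that in $\pi_1$ the relevant product of conjugates of the components is a product of commutators; since $n\ge 3$ there is enough room to perform band moves freely (bands can be slid past one another and across intersections because codimension is at least $2$). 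The key combinatorial facts, illustrated in Figures~\ref{fig:commutator} and~\ref{fig:unknotting}, are: (1) a coherent band surgery can realise any ``unknotting''-type crossing change up to adding a split unknot, and more importantly can merge two components or split one component; (2) an element $aba^{-1}b^{-1}$ (a commutator) can be killed by a sequence of coherent band surgeries, since one can band together the relevant strands so that the resulting component bounds an embedded disc and then band it off. Combining these, one reduces $L$ first to a link lying in a ball, then — using that its class is zero — to a split union of small unknots, which are removed one at a time by an obvious coherent band surgery (the reverse of the birth of a split unknot, i.e.\ banding the unknot to itself to cancel it is not coherent, so instead one bands it into a neighbouring component and then bands back, or one observes a split unknot can always be removed by a coherent band attaching it to another component and recompensating; this is exactly the content of \cite[Lemma~3.9]{MR1359844}).

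The main obstacle is making the middle step precise: controlling how band moves interact with the fundamental group and ensuring that ``$[L]=0$'' (a homological statement) can genuinely be witnessed by band surgeries (a more rigid, geometric operation) rather than by a mere homology. The clean way to handle this is to appeal directly to \cite[Lemma~3.9]{MR1359844}, where exactly this equivalence — homology class of a link versus equivalence under coherent band surgery — is established (implicitly) in the relevant generality; our task in the write-up is then mainly to extract that statement, check that the orientation conventions there match our definition of coherent band surgery, and note that the argument there is dimension-robust for $n\ge 3$. I expect the proof in the paper to be short precisely because it quotes \cite{MR1359844} for the hard direction, adding only the easy homological invariance above and the observation that the relation is symmetric.
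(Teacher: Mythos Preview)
Your ``only if'' direction matches the paper's. For the ``if'' direction you have located the two essential local moves --- realising a commutator and realising a crossing change by coherent band surgeries (Figures~\ref{fig:commutator} and~\ref{fig:unknotting}) --- and you correctly point to \cite[Lemma~3.9]{MR1359844} as the source. That much is right.

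Where your outline diverges from the paper is the global strategy. You propose to pass both $L$ and $L'$ to a common canonical form (the empty link, or ``a fixed baseline'') and then use symmetry of the relation. The paper instead works directly: after making both $L$ and $L'$ \emph{connected} by coherent band surgeries, the condition $[L]=[L']$ in $H_1(M;\Z)=\pi_1(M)^{\mathrm{ab}}$ says the two loops differ in $\pi_1$ by a product of commutators; the commutator move then equalises their free-homotopy classes, after which crossing changes (the unknotting move) turn one into the other. No baseline is needed, and the argument handles an arbitrary homology class at once, not just the null-homologous case you spell out.

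Your baseline approach also has a genuine snag you half-notice: the empty link is \emph{not} reachable from a nonempty link by coherent band surgeries, since each such surgery leaves at least one component. So your step ``remove the split unknots one at a time'' cannot terminate at $\emptyset$; you would have to stop at a single unknot (or, for a nonzero class, at a fixed knot representing that class), which you gesture at but do not carry out. The paper's direct route avoids this bookkeeping entirely.
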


\begin{proof}
The proof is done essentially in the proof of \cite[Lemma~3.9]{MR1359844}. 

Since the coherent band surgery clearly does not change 
the homology class represented by the submanifold, 
the necessity is obvious. 

Now suppose that $L$ and $L'$ are integrally homologous in $M$. 
By suitable coherent band surgeries, we may 
assume they are both connected.  
Then the proof requires just the third and fourth paragraphs 
of the proof of \cite[Lemma~3.9]{MR1359844}. 
Namely, we first need to show that the action of 
a commutator $\alpha\beta\alpha^{-1}\beta^{-1}$, 
for $\alpha, \beta\in\pi_1(M^3,x)$ 
and a point $x\in L$, 
can be realised 
by an iteration of coherent band surgeries 
(see Figure~\ref{fig:commutator}). 
Thus, in view of the fact that 
$H_1(M;\Z)$ can be identified with 
the abelianisation of $\pi_1(M^3,x)$, 
we may assume that $L$ and $L'$ 
represents the same class in  $\pi_1(M^3,x)$; hence 
we then need to show that the unknotting operation 
(the ``crossing change'' up to isotopy) can be realised by an 
iteration of coherent band surgeries 
(see Figure~\ref{fig:unknotting}). 
See \cite[pages 1150--1151]{MR1359844} for details. 
\end{proof}

\section{Stable maps from $3$-manifolds to the plane}\label{subsect:stable}
We introduce a stable map
from $3$-manifolds to the plane. 
Although the notion of a stable map can be defined for 
more general source and target manifolds, 
those from $3$-manifolds to the plane, 
in particular, have been extensively studied 
(see \cite{MR790729,MR814689,MR1396772} and \cite{MR2205725} for example). 
Thus we adopt here a common definition in the dimensions.  
(see e.\,g.\, \cite{MR790729} or \cite[p.\,6]{MR814689}).

For a smooth map 
$f\co M\to N$ between smooth manifolds, 
we denote by $S(f)$ the set of singular points, that is, we put 
\[
S(f)=\left\{p\in M\thinspace\vrule\thinspace\mathrm{rank}\thinspace df_p\le\min(\dim{M},\dim{N})\right\}, 
\]
and call it \textit{the singular set} of $f$. 

\begin{definition}
A smooth map $f\co M^3\to\R^2$ is called 
\textit{a stable map} if it satisfies the following conditions. 
\begin{enumerate}
\item[I.]
Local conditions: \\
For each point $p\in M^3$, 
there exist local coordinates $(x,y,z)$ centred at $p$ and 
$(X,Y)$ centred at $f(p)$ with which 
$f$ has one of the following forms: 
\begin{enumerate}
\item[(L1) ]
$(X\circ f,Y\circ f)=(x,y)$\hfill ($p$ is called \textit{a regular point}),
\item[(L2) ]
$(X\circ f,Y\circ f)=(x,y^2+z^2)$\hfill ($p$ is called \textit{a definite fold point}),
\item[(L3) ]
$(X\circ f,Y\circ f)=(x,y^2-z^2)$\hfill ($p$ is called \textit{a indefinite fold point}),
\item[(L4) ]
$(X\circ f,Y\circ f)=(x,xy+y^3+z^2)$\hfill ($p$ is called \textit{a cusp point}). 
\end{enumerate}
\item[II.]
Global conditions: 
\begin{enumerate}
\item[(G1) ]
for each cusp point $p\in M^3$, 
we have $f^{-1}\left(f(p)\right)\cap S(f)=\{p\}$.
\item[(G2) ]
$f$ restricted to $(S(f)\smallsetminus\{\text{cusp points}\})$ 
is an immersion with normal crossings, 
\end{enumerate}
\end{enumerate}
\end{definition}

\begin{remark}
The set of all stable maps $M^3\to\R^2$
from a compact $3$-manifold $M^3$ 
to the plane is open and dense 
in the mapping space 
$C^\infty(M^3,\R^2)$, the set of 
all smooth map $M^3\to\R^2$ 
endowed with the Whitney $C^\infty$-topology \cite{MR0293670}. 
Hence, every smooth map $M^3\to\R^2$ 
can be approximated 
by a stable map. 
\end{remark}

\begin{figure}
\includegraphics[width=\textwidth]{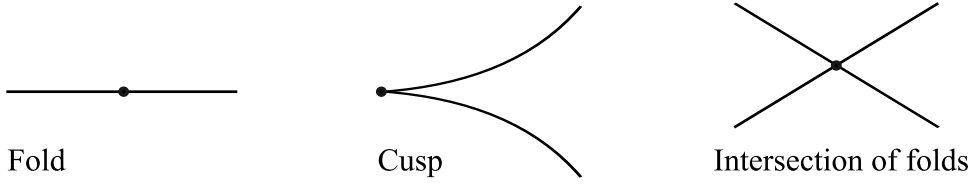}
\caption{Local images of singular points}\label{fig:stable}
\end{figure}

\begin{remark}
For a stable map $f\co M^3\to\R^2$ 
from a compact $3$-manifold $M^3$ 
to the plane, 
its singular set $S(f)$ forms 
a compact smooth $1$-dimensional 
submanifold of $M^3$. 
It consists of 
smooth arcs of definite folds or indefinite folds, and 
isolated cusp points, where definite and  
indefinite fold arcs meet. 
Figure~\ref{fig:stable} depicts 
the local image of the singular points. 

The $2$-dimensional regions divided 
by the lines consists of 
the images of regular points. 
The regular fibre, the inverse image of a regular point, 
consists of several copies of circles. 
Therefore, if we describe 
such regular fibres 
and how they degenerate in 
crossing the singular lines, 
we can recover the given map $f$ locally. 
This is successfully done in the fundamental paper \cite{MR2205725} 
due to Minoru Yamamoto, 
who has studied in detail stable maps from $3$-manifolds to the plane 
and their deformations with lots of clear figures. 
We will often refer 
the figures of his paper \cite{MR2205725}. 
\end{remark}

A generic homotopy $f_t\co M^3\to\R^2\ (t\in[-1,1])$
between two stable maps $f_0$ and $f_1$ has 
has been studied in 
\cite{chincaro,sotomayor}. 
The germ of such a generic homotopy $f_t$, 
in suitable local coordinates $(x,y,z)$ of 
$M^3$ and $(X,Y)$ of $\R^2$, 
is given by one of the following: 
\begin{enumerate}
\item[(i)]
$(X\circ f_t,Y\circ f_t)=(x,y^3+yx^2+z^2+yt)$\hfill (\textit{Lips}),
\item[(ii)]
$(X\circ f_t,Y\circ f_t)=(x,y^3-yx^2+z^2+yt)$\hfill (\textit{Beaks}),
\item[(iii)]
$(X\circ f_t,Y\circ f_t)=(x,y^4+yx\pm z^2+y^2t)$\hfill (\textit{Swallowtail}); 
\end{enumerate}
in addition, 
\begin{enumerate}
\item[(iv)]
an \textit{intersection of a fold and a cusp}, 
\item[(v)]
a \textit{non-transversal intersection of two folds}, 
\item[(vi)]
an \textit{intersection of three folds} 
\end{enumerate}
may occur 
as codimension one multigerms. 
(see Figure~\ref{fig:homotopy}). 
Each of the above homotopies passes through a non-stable map 
at $t=0$, \textit{a bifurcation point}. 
Note that only in (i), (ii) and (iii) 
the types of singularities 
changes through the bifurcation point. 
Note further that the case (iii) 
does not change \textit{the set} of singular points. 

\begin{figure}
\includegraphics[width=\textwidth]{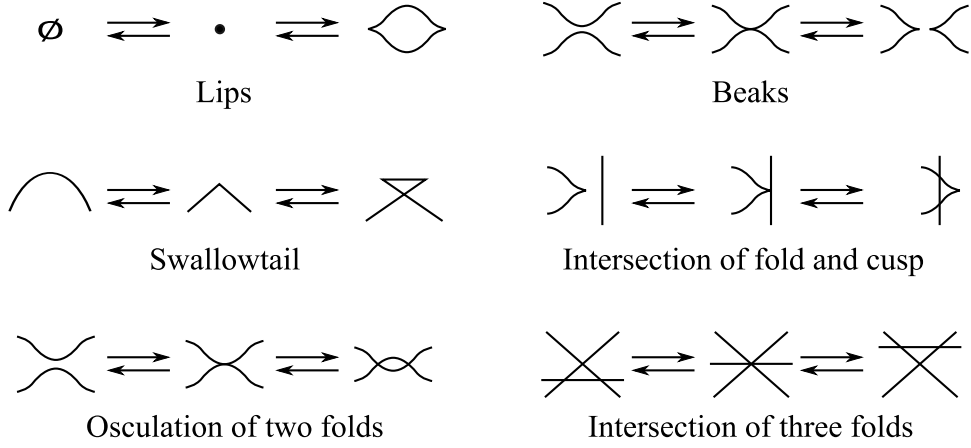}
\caption{Generic homotopies}\label{fig:homotopy}
\end{figure}

\section{Saeki's theorem}\label{subsect:saeki}
A striking result that 
every knot or link in $S^3$ is 
the singular set of a stable map from $S^3$ to the plane 
is a consequence of the following 
theorem due to Saeki \cite[Theorem~2.2]{MR1359844}. 

\begin{theorem}[Saeki {\cite[Corollary~6.3]{MR1359844}}]\label{thm:saeki}
Let $M^3$ be a closed orientable $3$-manifold and $L$ 
be a closed $1$-dimensional submanifold of $M^3$. Then
there exists a stable map $f\co M^3\to\R^2$ with $S(f)=L$
if and only if the $\Z/2\Z$-coefficient homology class $[L]_2$ 
vanishes in $H_1(M^3;\Z/2\Z)$. 
\end{theorem}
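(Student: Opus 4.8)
For the necessity I would invoke the Thom polynomial together with the classical fact that every closed orientable $3$-manifold is parallelisable. For a stable map $f\co M^3\to\R^2$ the singular set $S(f)$ is the fold-and-cusp locus $\Sigma^1(f)$, of codimension two in $M^3$, and its $\Z/2\Z$-fundamental class is Poincar\'e dual to the Thom polynomial of $\Sigma^1$, a universal polynomial of degree two in the Stiefel--Whitney classes of the virtual bundle $f^*T\R^2-TM^3$. Since $TM^3$ and $f^*T\R^2$ are both trivial, those classes all vanish, hence so does $[S(f)]_2$ in $H_1(M^3;\Z/2\Z)$; this is the content of \cite{MR0087149} in these dimensions.

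For the sufficiency the plan is to start from an arbitrary stable map and deform it through stable maps so as to prescribe its singular set. Since stable maps are dense in $C^\infty(M^3,\R^2)$, choose any stable map $f_0\co M^3\to\R^2$ and put $L_0=S(f_0)$; by the necessity just proved $[L_0]_2=0$, exactly as for the target link $L$. A band surgery does not change the $\Z/2\Z$-homology class of a closed $1$-submanifold (mod two, the deleted and the inserted arcs together bound the band), and conversely the $\Z/2\Z$-coefficient, unoriented analogue of Proposition~\ref{prop:coherent}, established in \cite[Lemma~3.9]{MR1359844}, says that $L$ and $L_0$ are related by a finite sequence of band surgeries and ambient isotopies. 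It therefore suffices to prove:
\begin{enumerate}
\item[(a)] a band surgery on the singular set of a stable map is realised by a generic homotopy of stable maps supported in an arbitrarily small neighbourhood of the band; and
\item[(b)] a small trivial circle may be inserted into, or deleted from, the singular set by a generic homotopy of stable maps.
\end{enumerate}
Given (a) and (b), one concatenates finitely many such homotopies, starting from $f_0$, to reach a stable map $f$ with $S(f)=L$. (We read ``closed $1$-submanifold'' as nonempty here: there is no submersion from a closed $3$-manifold onto the connected noncompact $\R^2$, so $S(f)\neq\emptyset$ for every stable map.)

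Claim (b) is the \emph{lips} homotopy (i) of \S\ref{subsect:stable} together with its time-reversal: the former produces out of a regular region a small fold circle carrying two cusps and bounding a disc, the latter cancels it. Claim (a) is the technical heart. I would realise it with the \emph{beaks} homotopy (ii) of \S\ref{subsect:stable}, whose effect on the singular set is exactly the recombination of two fold arcs, that is, a band surgery. The difficulty is that a given band $b\co I\times I\to M^3$ attached to two arcs of $S(f)$ need not be presented in the local model required by (ii): the two arcs must be approached from compatible sides and the nearby regular fibres must be laid out so that the local recombination is consistent with the global fibre structure of $f$. I would therefore first normalise the configuration by an ambient isotopy together with auxiliary lips moves --- inserting, where needed, cancelling pieces of singular set to make room --- until $b$ has the model form, and only then apply beaks. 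Carrying out this normalisation, which comes down to a careful case analysis of stable maps near their fold arcs (for which the explicit pictures of \cite{MR2205725} are the natural bookkeeping), is the step I expect to be the main obstacle; it is precisely the work carried out in \cite{MR1359844}, to which I would defer for the details.
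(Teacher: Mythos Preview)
Your outline matches the paper's in its skeleton: the necessity via the Thom polynomial and the triviality of $TM^3$ is exactly what the paper invokes, and the sufficiency strategy --- start from an arbitrary stable map $f_0$, use \cite[Lemma~3.9]{MR1359844} to connect $S(f_0)$ to $L$ by band surgeries, and realise each band surgery by a generic homotopy built from the Beaks bifurcation --- is precisely Saeki's scheme as reviewed here.

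The gap is in your proposed auxiliary moves. You suggest normalising the band configuration with ambient isotopy and \emph{lips} moves before applying beaks. But lips only births or kills a small \emph{disjoint} fold circle; it does not help you position the two existing fold arcs for a beaks merger. The actual mechanism, which the paper spells out as (H1)--(H3), is different: a Beaks (right-to-left) requires two \emph{cusps} pointing toward each other, so one first uses the \emph{Swallowtail} bifurcation (H2) to create a cusp on each of the two fold arcs at the ends of the band, then uses repeated \emph{fold--cusp intersection} moves (H3) to slide those cusps along the band until they face one another, and only then applies Beaks (H1). Your instinct that some preparatory moves are needed is correct, but the moves you name would not do the job; the cusps produced by swallowtail are the ``footholds'' the paper refers to, and lips provides none. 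Your claim~(b) is also superfluous, as you yourself note that $S(f)\neq\emptyset$ always --- the paper remarks the same and drops (H0) from the list of needed moves.
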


We review the outline of Saeki's proof. 

His first observation is that 
any two $1$-dimensional submanifolds (links) 
$L_1$ and $L_2$ in $M^3$ can be 
connected by a finite iteration of (not necessarily coherent) 
band surgeries 
if and only if $[L_1]_2=[L_2]_2$ in $H_1(M^3;\Z/2\Z)$ 
(\cite[Lemma~3.9]{MR1359844}, compare it with 
Proposition~\ref{prop:coherent}). 

The second point is that the Beaks, one of the seven generic 
homotopies introduced in \S\ref{subsect:stable}, 
affect the singular set just as the band surgery. 
More precisely, we can deform a given stable map 
$f\co M^3\to\R^2$, 
so that an arbitrary band surgery 
is performed on the singular set $S(f)$, 
by suitably iterating the following 
four types of generic homotopies (see \cite[Remark~4.2]{MR1359844}). 

\begin{enumerate}
\item[(H0)]
\textit{Lips from left to right} in Figure~\ref{fig:homotopy}, 

\item[(H1)]
\textit{Beaks from right to left} in Figure~\ref{fig:homotopy}, 

\item[(H2)]
\textit{Swallowtail from left to right} in Figure~\ref{fig:homotopy}, and 

\item[(H3)]
\textit{Intersection of a fold and a cusp from left to right} in Figure~\ref{fig:homotopy}.
\end{enumerate}
Specifically, these homotopies are used as follows. 
First, if the stable map has the empty singular set, 
by using (H0) we can make it nonempty 
(note that we do not need 
this since a stable map to the plane 
necessarily has the nonempty singular set). 
Then, if we need to perform a band surgery 
in some place on the singular set, 
we use (H2) to yield cusps in the correct locations, 
which serve as ``footholds'' for the necessary ``band''; 
and then extend the band from one foot 
toward the other by a series of (H3). 
The Beaks (H1) 
in the final stretch complete 
the desired band surgery on the singular set.  

Thus, recalling that, among the above four 
generic homotopies, 
only the Lips and the Beaks 
may possibly change the isotopy class 
(link type) of $S(f)\subset M^3$ (and the Lips are 
not used in our case), the proof 
is outlined as follows. 
\textit{%
The ``only if'' part is nothing but the Thom 
polynomial: for any stable map 
$f\co M^3\to\R^2$ the homology class $[S(f)]_2$ 
represents the dual of the second Stiefel--Whitney class 
$w_2(M^3)$ and hence always vanishes in 
$H_1(M^3;\Z/2\Z)$ 
(Thom \cite{MR0087149}). 
For the ``if'' part, 
beginning with an arbitrary stable map 
$f_{\mathit{init.}}\co M^3\to\R^2$ we can modify it 
into $f'\co M^3\to\R^2$ 
through a finite iteration of (H1), (H2) and (H3) 
so that the singular set $S(f')$ is isotopic to any given $L$, 
as long as $[L]_2=0\in H_1(M^3;\Z/2\Z)$. 
Finally, by composing a suitable homeomorphism 
(derived from an ambient isotopy) of $M^3$ with $f'$ 
we obtain the desired map $f\co M^3\to\R^2$ 
with $S(f)=L$ (see \cite[p.\,1155]{MR1359844}).} 

\begin{remark}\label{rmk:init}
From now on, we will call the above stable map 
$f_{\mathit{init.}}\co M^3\to\R^2$ 
\textit{the initial stable map} 
in Saeki's construction. 
\end{remark}

\section{Liftable stable maps and complex tangents}\label{subsect:lift}
Given a stable map 
the problem whether it can be lifted 
to an immersion (or an embedding) 
has been studied by many authors 
(the references in \cite{MR3153918} might be convenient), 
including Haefliger \cite{MR0116357} and 
Harold Levine \cite{MR814689}. 
Haefliger \cite{MR0116357} has studied the lifting problem 
of a stable map from a surface to $\R^2$ and 
obtained a necessary and sufficient condition 
for being lifted to an immersion into $\R^3$. 
H.~Levine \cite{MR1101845} has studied 
the analogous problem of the existence (and classification \cite{MR814689}) 
of immersions into $\R^4$ over stable maps 
from $3$-manifolds to $\R^2$. 

\begin{definition}
Let $f\co M^3\to\R^2$ be a stable map 
from an orientable $3$-manifold to $\R^2$. 
We shall say that $f$ is \textit{liftable} or 
has \textit{an immersion lift $\widetilde{f}$} in $\R^4$ if 
there exists an immersion 
$\widetilde{f}\co M^3\to\R^4$ such that 
$\pi\circ\widetilde{f}=f$ for the projection 
$\pi\co\R^4\to\R^2$. 
\end{definition}

\begin{remark}\label{rmk:lift}
Note that any 
closed orientable $3$-manifold $M^3$ admits a 
liftable stable map to $\R^2$.  
To obtain such a stable map, 
we only need to compose 
any generic immersion 
$M^3\to\R^4$ with 
a generic projection $\R^4\to\R^2$ \cite{MR0362393}.  
\end{remark}

The following seems the	 first indication 
of a relation between liftable stable maps and complex tangents. 

\begin{theorem}\label{thm:imm}
Let $g_1=(f_1,f_2)\co M^3\to\R^2$ be a stable map 
of a closed orientable $3$-manifold 
which has an immersion lift 
$\widetilde{g_1}=(f_1,f_2,f_3,f_4)$. Then, 
the map 
\[
G=(f_1,f_2,f_3,f_4,f_1,-f_2)\co M^3\longrightarrow\R^6=\C^3
\]
defines a smooth immersion of $M^3$ into $\C^3$ 
the set of whose complex tangents coincides 
with the singular set $S(g_1)$ of $g_1$. 
\end{theorem}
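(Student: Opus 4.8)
The plan is to write down the differential of $G$ explicitly at an arbitrary point $p\in M^3$ and to characterise exactly when the image $dG_p(T_pM^3)$ contains a complex line, in terms of the rank of the pair $(df_1,df_2)$ at $p$. First I would fix the standard complex structure $J$ on $\R^6=\C^3$ acting on coordinates $(u_1,v_1,u_2,v_2,u_3,v_3)$ by $J(\partial_{u_j})=\partial_{v_j}$, $J(\partial_{v_j})=-\partial_{u_j}$, so that the pairs of coordinates of $G=(f_1,f_2,f_3,f_4,f_1,-f_2)$ are grouped as $(f_1,f_2)$, $(f_3,f_4)$, $(f_1,-f_2)$. A tangent vector $w\in T_pM^3$ is mapped to $dG_p(w)=\big(df_1(w),df_2(w),df_3(w),df_4(w),df_1(w),-df_2(w)\big)$. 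The key observation is that a complex tangent occurs at $p$ precisely when there exist $w,w'\in T_pM^3$, not both zero in the relevant sense, with $dG_p(w')=J\,dG_p(w)$; writing this component-wise gives the system
\begin{equation*}
df_1(w')=-df_2(w),\quad df_2(w')=df_1(w),\quad df_3(w')=-df_4(w),\quad df_4(w')=df_3(w),\quad df_1(w')=df_2(w),\quad -df_2(w')=-df_1(w).
\end{equation*}
The first and fifth equations force $df_2(w)=-df_2(w)$, i.e. $df_2(w)=0$, and then $df_1(w')=0$; combined with the second and sixth equations this forces $df_1(w)=df_2(w')=0$ and also (from the sixth) $df_2(w')=df_1(w)=0$ is consistent. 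So the presence of a complex line is equivalent to the existence of a nonzero $w$ with $df_1(w)=df_2(w)=0$, i.e. to $\operatorname{rank}(df_1,df_2)_p\le 1$, which is exactly the condition $p\in S(g_1)$ since $g_1=(f_1,f_2)\colon M^3\to\R^2$.

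Conversely, I would check that when $p\in S(g_1)$ one can genuinely produce such a $w$ and a companion $w'$ realising $J$-invariance inside $dG_p(T_pM^3)$: pick $0\ne w\in\ker dg_{1,p}$ (which is at least $1$-dimensional when the rank drops), and I need a $w'$ with $df_3(w')=-df_4(w)$, $df_4(w')=df_3(w)$, together with $df_1(w')=df_2(w')=0$. The last two conditions say $w'\in\ker dg_{1,p}$ as well. So the real content is: does the $2$-plane $dG_p(\ker dg_{1,p})$ — which sits inside the $(f_3,f_4)$-coordinate plane once we quotient out the (zero) $(f_1,f_2)$ and $(f_1,-f_2)$ parts — contain a complex line? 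Here is where I would use that $\widetilde{g_1}=(f_1,f_2,f_3,f_4)$ is an \emph{immersion}: on $\ker dg_{1,p}$ the map $d\widetilde{g_1}_p$ is injective, hence $(df_3,df_4)$ restricted to $\ker dg_{1,p}$ is injective, so $dG_p(\ker dg_{1,p})$ is a $2$-plane lying entirely in the complex line $\C\cdot(\partial_{u_2}) = \{(0,0,*,*,0,0)\}\subset\C^3$ — and a $2$-plane equal to a complex line is of course $J$-invariant. Thus every $p\in S(g_1)$ is a complex tangent, and by the computation above no other point is. Finally I would remark that the same injectivity of $d\widetilde{g_1}_p$ on all of $T_pM^3$, together with the fact that the extra block $(f_1,-f_2)$ only repeats coordinates already present, shows $dG_p$ is injective everywhere, so $G$ is an immersion.

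The main obstacle, and the step deserving the most care, is the converse direction: verifying that at a singular point the $J$-invariant subspace one writes down is actually two-dimensional and lies in $dG_p(T_pM^3)$, rather than collapsing. This is precisely where the hypothesis that the lift $\widetilde{g_1}$ is an \emph{immersion} (and not merely a smooth map) is essential — it guarantees that $\ker dg_{1,p}$ is mapped injectively by $(f_3,f_4)$, producing a genuine complex line $\C\cdot\partial_{u_2}$ inside the image. I expect the forward direction (no complex tangents off $S(g_1)$) to be a short linear-algebra argument as sketched, while the immersion claim for $G$ is routine once the coordinate bookkeeping of the repeated block $(f_1,-f_2)$ is organised cleanly.
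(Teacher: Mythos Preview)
Your overall strategy is sound and close in spirit to the paper's proof (which projects a hypothetical complex line to each factor $\C_i$ and uses that $g_1$ and $g_3=(f_1,-f_2)$ are complex conjugate), but the forward direction as written has a genuine gap. From your six equations you correctly deduce that both $w$ and $w'$ must lie in $\ker (dg_1)_p$. You then assert that this is ``equivalent to the existence of a nonzero $w$ with $df_1(w)=df_2(w)=0$, i.e.\ to $\operatorname{rank}(df_1,df_2)_p\le 1$''. Both identifications fail: since $g_1\colon M^3\to\R^2$, the kernel of $(dg_1)_p$ is \emph{always} at least one-dimensional, so a nonzero $w$ in the kernel exists at every point; and ``nonzero kernel'' is not the same as ``rank $\le 1$'' here. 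What actually rules out a complex tangent at a regular point is the pair of equations you have not yet used, namely $(df_3(w'),df_4(w'))=J\cdot(df_3(w),df_4(w))$. At a regular point $\ker(dg_1)_p$ is exactly one-dimensional, so $w'=\lambda w$ for some real $\lambda$; the rotation equation then reads $\lambda\,(df_3(w),df_4(w))=J\,(df_3(w),df_4(w))$, and since $J$ has no real eigenvalue this forces $(df_3(w),df_4(w))=0$, whence $w\in\ker d\widetilde{g_1}_p=\{0\}$ because $\widetilde{g_1}$ is an immersion. That is the missing step. (Incidentally, your displayed sixth equation should be $-df_2(w')=df_1(w)$, not $-df_2(w')=-df_1(w)$; with the sign you wrote, equations (2) and (6) are identical and do not force $df_1(w)=0$.)

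Your converse direction is fine and matches the paper's: at a singular point of a stable map the rank of $dg_1$ is exactly $1$, so $\ker(dg_1)_p$ is two-dimensional, the immersion hypothesis makes $(df_3,df_4)$ an isomorphism from this kernel onto $\R^2=\C_2$, and the image $dG_p(\ker(dg_1)_p)$ is precisely the complex line $\{0\}\times\C_2\times\{0\}$. Once the gap above is filled, your argument is a correct, somewhat more explicit variant of the paper's.
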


\begin{proof}
Put $g_2:=(f_3,f_4)$ and $g_3:=(f_1, -f_2)$, so that 
we have 
\[
G=(g_1,g_2,g_3)\co M^3\longrightarrow\C^3. 
\]

First, we show that the immersion $G$ is totally 
real on $M^3\smallsetminus S(g_1)$. 
Take any regular point $p$ of $g_1$ and 
consider the differential map 
\[
dG_p: T_pM^3\longrightarrow T_{G(p)}\C^3=\C^3=\C_1\oplus\C_2\oplus\C_3
\] 
at $p$. 
Then, the image $dG_p(T_pM^3)$ contains no 
complex line by the following reason. 

Suppose that $dG_p(T_pM^3)$ contains a complex line $l_p$. 
Then, via the projections $T_{G(p)}\C^3\to\C_i$ for $i=1,2$ and $3$, 
the line $l_p$ should be mapped holomorphically onto or zero to each $\C_i$. 
Since $\mathrm{dim}_{\R}\ker {(dg_1)_p}=1$, we see that 
$l_p$ should be mapped holomorphically onto both $\C_1$ and $\C_3$. 
But this is impossible since, 
by the definition of $G$, if $l_p$ is mapped holomorphically onto one 
then it should be mapped anti-holomorphically onto the other. 
Thus, $G$ is totally real on $M^3\smallsetminus S(g_1)$. 

Next, we show that for any singular point $p\in S(g_1)$, 
the image 
$dG_p(T_pM^3)$ contains a complex line. 
Since $g_1$ is a stable map, 
along the smooth link $L\coloneqq S(g_1)$, 
$\ker{dg_1\restr_{L}}$ is a real $2$-dimensional vector bundle. 
Furthermore, since $\widetilde{g_1}=(g_1, g_2)$ is an immersion, 
$(dg_2)_p$, for $p\in L$, gives a linear isomorphism between $\ker{(dg_1)}_p$ and $\C_2$.  
Since 
$\ker{(dg_1)_p}=\ker{(dg_3)_p}$, 
the image $dG_p(\ker{(dg_1)}_p)$ is equal to $(dg_2)_p(\ker{(dg_1)}_p)$, 
which is nothing but the complex line $T_{g_2(p)}\C=\C_2$ in $T_{G(p)}\C^3$. 

We have thus shown that the set of complex tangents of 
the immersion $G$ coincides with the 
singular set $S(g_1)$ of the stable map $g_1$. 
\end{proof}

\begin{remark}~\label{rmk:trans}
In Theorem~\ref{thm:imm}, 
since the singular set $L=S(g_1)$ is a $1$-dimensional 
submanifold of $M^3$ 
and the map $g_1=(f_1,f_2)$ restricted to $L$, except 
at isolated cusp points, is 
a self-transverse immersion to the plane, 
we may assume that the immersion 
$\widetilde{g_1}=(f_1,f_2,f_3,f_4)$ restricted to 
$L$ is an embedding, 
by slightly perturbing $f_3$ and $f_4$ if necessary. 
Then, there is a tubular neighbourhood $N$ of 
$L$ such that $\widetilde{g_1}\restr_N$ is an embedding. 
Thus we can choose the immersion $G$ so that its restriction 
to $N$ is an embedding. 
\end{remark}

The following is an easy consequence of 
Theorems~\ref{thm:CT-homology} and \ref{thm:imm}. 
As mentioned in \S\ref{sect:intro}, it is intriguing to compare Corollary~\ref{cor:byprod} 
with the usual Thom polynomial \cite{MR0087149}. 

\begin{corollary}\label{cor:byprod}
Let $f\co M^3\to\R^2$ be a liftable stable map 
from a closed orientable $3$-manifold $M^3$. 
Then, $[S(f)]=0\in H_1(M;\Z)$. 
\end{corollary}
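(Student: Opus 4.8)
The plan is to derive Corollary~\ref{cor:byprod} directly by feeding a liftable stable map into the machinery of Theorem~\ref{thm:imm} and then invoking the homological constraint of Theorem~\ref{thm:CT-homology}. Concretely: let $f=(f_1,f_2)\co M^3\to\R^2$ be a liftable stable map, so by definition there is an immersion $\widetilde f=(f_1,f_2,f_3,f_4)\co M^3\to\R^4$ with $\pi\circ\widetilde f=f$. Applying Theorem~\ref{thm:imm} with $g_1=f$, we obtain the map $G=(f_1,f_2,f_3,f_4,f_1,-f_2)\co M^3\to\R^6=\C^3$, which is a smooth immersion whose set of complex tangents is exactly $S(f)$.

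The next step is to promote $G$ to a \emph{generic} immersion without disturbing its set of complex tangents, so that Theorem~\ref{thm:CT-homology} applies verbatim. Here I would argue that a sufficiently small perturbation of $G$ supported away from a tubular neighbourhood $N$ of $S(f)$ (using Remark~\ref{rmk:trans}, on $N$ the immersion $G$ is already an embedding) can be taken generic in the sense of Lai's theorem, while the Gauss map $\Gamma_G$ stays close to the original one; since $G$ is strictly totally real on the compact set $M^3\smallsetminus (\text{int }N)$, a small enough perturbation there introduces no new complex tangents, and on $N$ nothing is changed. Thus one obtains a generic immersion $G'\co M^3\to\C^3$ whose set of complex tangents is still precisely $S(f)$, now a codimension-two closed orientable submanifold realised as $\Gamma_{G'}^{-1}(W)$ in the notation of the proof of Theorem~\ref{thm:CT-homology}.

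Finally, Theorem~\ref{thm:CT-homology} asserts that the integral homology class represented by the set of complex tangents of a generic immersion of a closed orientable $3$-manifold into $\C^3$ vanishes in $H_1(M^3;\Z)$. Applying this to $G'$ yields $[S(f)]=0\in H_1(M^3;\Z)$, which is the claim. (One should note that $H_1(M;\Z)$ and $H_1(M^3;\Z)$ are the same group; the statement writes $M$ for $M^3$.)

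The main obstacle I anticipate is the genericity step: one must be sure that the passage from $G$ to a generic immersion $G'$ can be done while keeping the set of complex tangents \emph{equal} to $S(f)$, not merely homologous to it or isotopic to it. The cleanest way to handle this is to exploit that $G$ is already totally real on the complement of $N$, so ``generic'' only needs to be arranged on a set where the Gauss map avoids $W$ entirely and therefore remains transverse to $W$ after any small perturbation; combined with the fact that $G\restr_N$ is untouched, transversality of $\Gamma_{G'}$ to $W$ is automatic and $\Gamma_{G'}^{-1}(W)=S(f)$. If one prefers to avoid even this mild perturbation, an alternative is to observe that the conclusion of Theorem~\ref{thm:CT-homology} only used that the Gauss map is homotopic into $G_{6,3}^{\mathrm{TR}}$ and transverse to $W$; the immersion $G$ coming from Theorem~\ref{thm:imm} already has its Gauss map transverse to $W$ along $S(f)$ (this is essentially the content of the local computations in that proof), and $G$ is regularly homotopic to a totally real immersion by \cite{MR966952,MR880125}, so the bordism argument of Theorem~\ref{thm:CT-homology} goes through for $G$ itself.
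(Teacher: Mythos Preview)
Your argument is correct and is precisely the route the paper takes: it states the corollary as ``an easy consequence of Theorems~\ref{thm:CT-homology} and \ref{thm:imm}'' without further detail. Your extra care about the genericity hypothesis in Theorem~\ref{thm:CT-homology} (handled either by a small perturbation away from $N$ or by noting that the bordism argument there only needs transversality of the Gauss map to $W$ and a homotopy into $G_{6,3}^{\mathrm{TR}}$) is a welcome refinement that the paper leaves implicit.
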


\section{An orientation of the singular set of liftable stable maps and an extension of Saeki's theorem}\label{subsect:ext}
As mentioned in \S\ref{subsect:lift}, 
H.~Levine \cite{MR1101845} has studied 
the lifting problem of stable maps 
from $3$-manifolds to the plane and 
given an example of a non-liftable stable map 
from an orientable $3$-manifold 
(\cite[Example~2, p.\,288]{MR1101845}).
His example is based on 
a certain necessary condition for the existence 
of an immersion lift in terms of 
the rotation numbers of regular fibres and an 
appropriate orientation on the singular set 
(see also 
\cite[Theorem, p.\,55 and Proposition, p.\,59]{MR814689}). 
We recall it briefly here. 

\begin{figure}
\includegraphics[width=\textwidth]{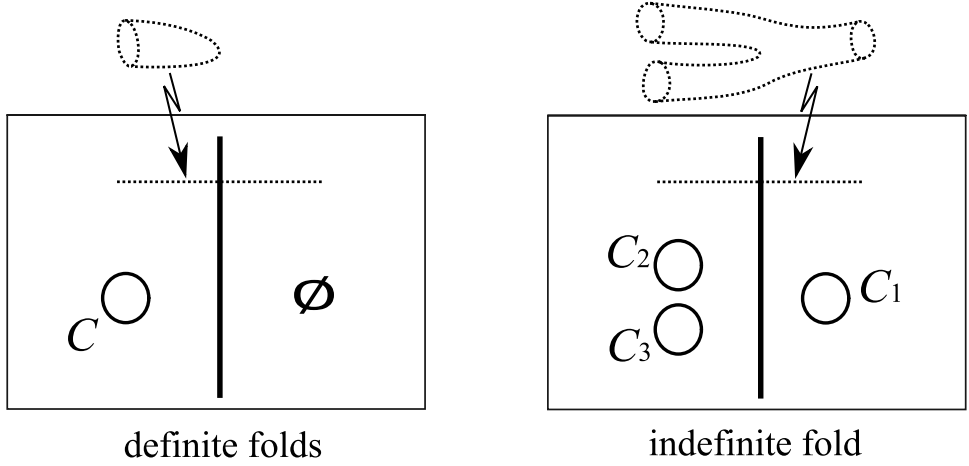}
\caption{A good orientation of the singular set}\label{fig:orientation}
\end{figure}

Let $f\co M^3\to\R^2$ be a stable map 
from an oriented $3$-manifold with 
an immersion lift 
$\widetilde{f}=(f,h)\co M^3\to\R^4=\R^2\times\R^2$, 
and $\pi\co\R^4\to\R^2$ the projection onto the first factor. 
Then, each component $C$ of the regular fibre 
of $f$ over a point $x\in\R^2$, 
which can be compatibly oriented 
with respect to the orientations of $M^3$ and $\R^2$, 
is immersed by $h$ into $\R^2=\pi^{-1}(x)$, so that we can consider 
the rotation number $r(C)$ of this immersion. 
Then, according to \cite[p.\,288]{MR1101845}, 
for such a liftable stable map $f$ 
we can take an orientation of 
the singular set $S(f)$ such that the 
following equations hold in Figure~\ref{fig:orientation}, 
which depicts the image of definite and indefinite fold points 
up to regular circle components: 
\begin{itemize}
\item 
$r(C)=1$ (resp.\ $=-1$) if the arc of definite folds is oriented upward (resp.\ downward),  and 
\item
$r(C_2)+r(C_3)-r(C_1)=1$ (resp.\ $=-1$) if the arc of indefinite folds is oriented upward (resp.\ downward). 
\end{itemize}
We shall call it \textit{a good orientation} of $S(f)$ 
in what follows. 
Note that the choice of a good orientation is not unique. 

\bigskip
The rest of this section is devoted to 
extending Saeki's theorem slightly. 
Namely, we show that 
in Theorem~\ref{thm:saeki}
the stable map $f$ can be chosen to be liftable,
if the integral homology class $[L]$ represented by 
the given link $L$ vanishes. 

\begin{theorem}\label{thm:ext}
Let $M^3$ be a closed orientable $3$-manifold and $L$ 
be a closed $1$-dimensional submanifold of $M^3$. Then, 
there exists a liftable stable map $f\co M^3\to\R^2$ 
with $S(f)=L$ if and only if 
the $\Z$-coefficient homology class $[L]$ vanishes 
in $H_1(M^3;\Z)$. 
\end{theorem}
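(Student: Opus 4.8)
The plan is to prove the two implications separately; the ``only if'' direction is immediate, since a liftable stable map $f$ satisfies $[S(f)]=0$ in $H_1(M^3;\Z)$ by Corollary~\ref{cor:byprod}, so $S(f)=L$ forces $[L]=0$. The substance is the ``if'' direction, and for this I would run Saeki's construction (\S\ref{subsect:saeki}) while carrying an immersion lift along with it. Begin, as in Remark~\ref{rmk:lift}, from some liftable stable map $f_0\co M^3\to\R^2$ with a chosen immersion lift $\widetilde{f_0}=(f_0,h_0)\co M^3\to\R^4$; by Corollary~\ref{cor:byprod} its singular set $L_0:=S(f_0)$ satisfies $[L_0]=0\in H_1(M^3;\Z)$, and the lift determines a good orientation on $L_0$ as in \S\ref{subsect:ext}. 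Orienting $L$ arbitrarily, we have $[L]=0=[L_0]$, so Proposition~\ref{prop:coherent} supplies a finite sequence of coherent band surgeries carrying $L_0$ to an oriented submanifold isotopic to $L$.

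The core step is to realise each of these coherent band surgeries by a concatenation of the generic homotopies (H2), (H3), (H1) of \S\ref{subsect:saeki}, arranged so that the Beaks (H1) attaches its band \emph{coherently} with respect to the good orientation carried at that stage, and to propagate the immersion lift through the whole concatenation. I would isolate this as a local lifting lemma: if $f_t$ is one of the elementary homotopies (H2), (H3), (H1), if $f_{-1}$ is liftable and carries a good orientation, and if --- in the Beaks case --- the band is attached coherently, then $f_t$ lifts to a regular homotopy $\widetilde{f_t}$ of immersions into $\R^4$; in particular $f_1$ is liftable and inherits a good orientation. In the (H2) and (H3) cases this is essentially a local matter --- (H2) does not change the set of singular points and (H3) only drags a fold arc past a cusp --- so the lift is extended by a local adjustment of the $\R^2$-valued part $h$, and one just checks that the new fold arcs and cusps created as ``footholds'' and as the body of the band carry a consistent orientation keeping the rotation-number equations of \S\ref{subsect:ext} valid. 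The Beaks case is the crux: writing down the standard model of the Beaks homotopy and computing the rotation numbers $r(C)$ of the regular fibres flanking the band just before and just after the bifurcation, one should find that the two ways of completing the Beaks are distinguished precisely by whether they respect the good orientation, the coherent one being exactly the one along which the rotation numbers vary continuously in $t$, so that the immersion lift survives.

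Granting the lemma, iterating over the full sequence of coherent band surgeries produces a liftable stable map $f'\co M^3\to\R^2$ whose singular set $S(f')$ is isotopic to $L$; composing $f'$ and its lift with a self-diffeomorphism of $M^3$ realising that isotopy then gives, exactly as in the last step of Saeki's proof, a liftable stable map $f$ with $S(f)=L$. The main obstacle I anticipate is the Beaks case of the lifting lemma: pinning down the local model of the Beaks together with its $\R^4$-lift and carrying out the rotation-number computation carefully enough to identify \emph{coherent} band surgery as the exact condition under which the lift extends across the bifurcation. By comparison, tracking the good orientation through the (H2) and (H3) steps so that each band is ultimately attached coherently, and the final diffeomorphism adjustment, are routine.
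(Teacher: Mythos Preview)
Your proposal is correct and follows essentially the same approach as the paper: both dispatch the ``only if'' direction via Corollary~\ref{cor:byprod}, start from a liftable initial stable map equipped with a good orientation, invoke Proposition~\ref{prop:coherent} to reduce to coherent band surgeries, and then verify that the elementary homotopies (H1)--(H3) preserve liftability, with the coherent Beaks case isolated as the crux via the rotation-number constraints. The paper executes the lifting lemma by enumerating all local subcases (definite versus indefinite folds, etc.) following Yamamoto's classification, but the overall strategy is identical to yours.
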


\begin{figure}
\includegraphics[width=\textwidth]{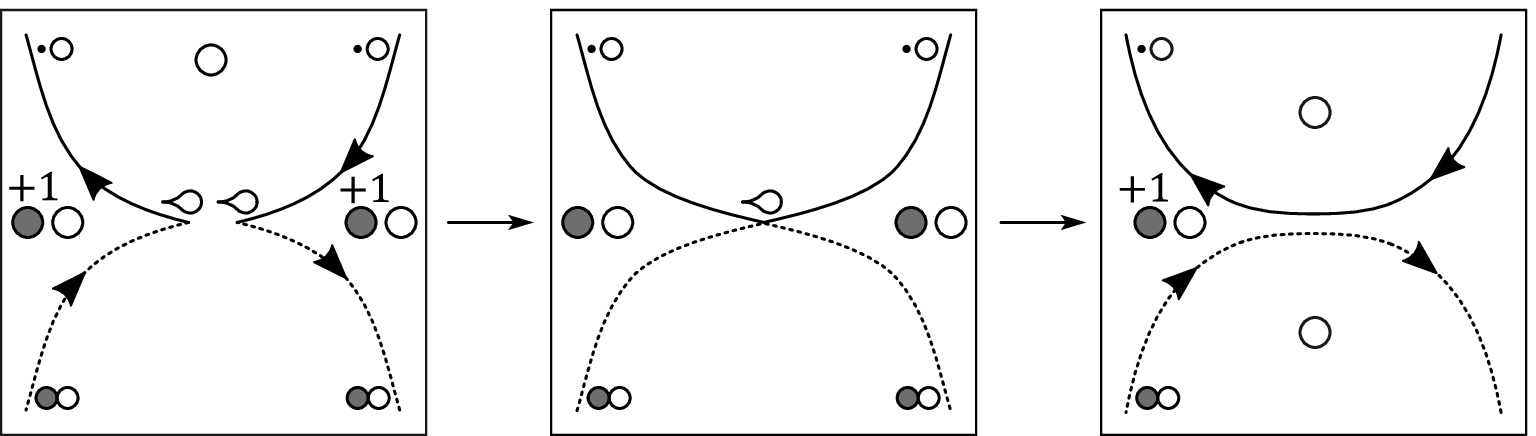}
\includegraphics[width=\textwidth]{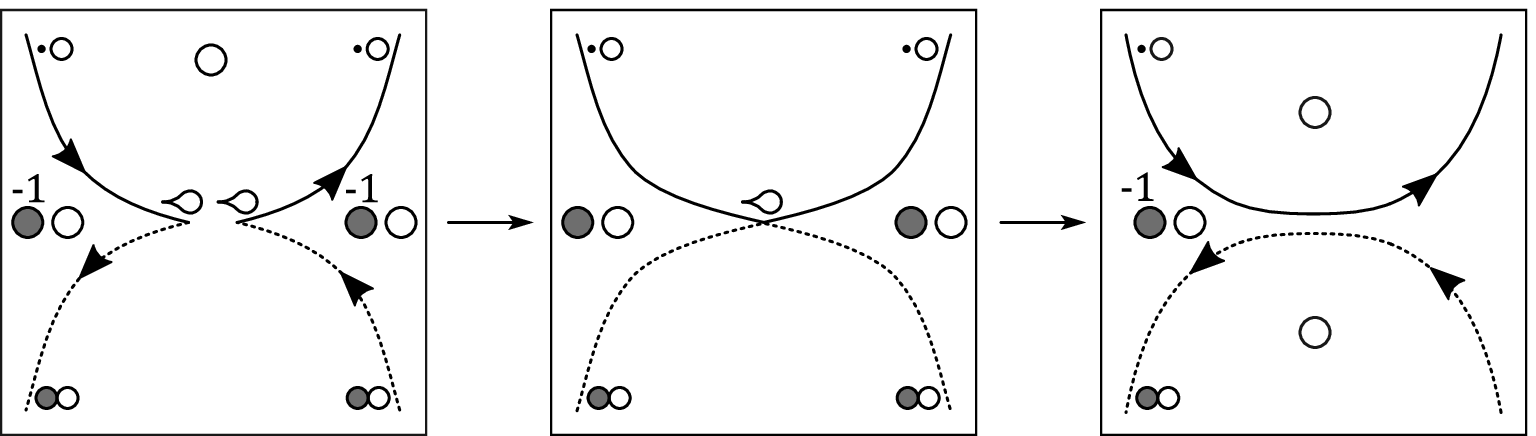}
\caption{Beaks with coherent orientations (see \cite[Figure~6(a) (2) and Figure~8(a) $\mathrm{III}^a(b)$]{MR2205725})}\label{fig:s1}
\end{figure}

\begin{figure}
\includegraphics[width=\textwidth]{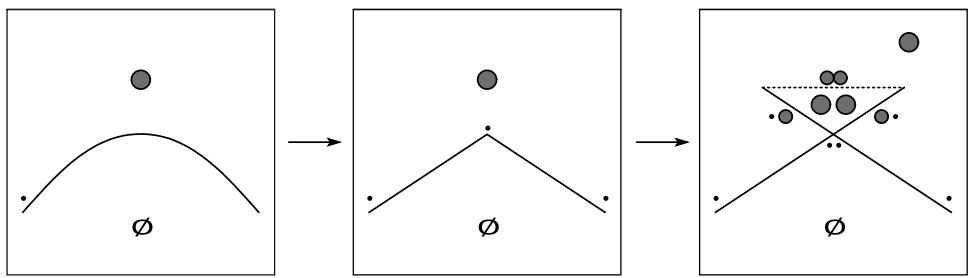}
\caption{D-swallowtail(see \cite[Figure~6(a) (3) and Figure~8(a) $\mathrm{III}^b$]{MR2205725})}\label{fig:s2}
\end{figure}

\begin{figure}
\includegraphics[width=\textwidth]{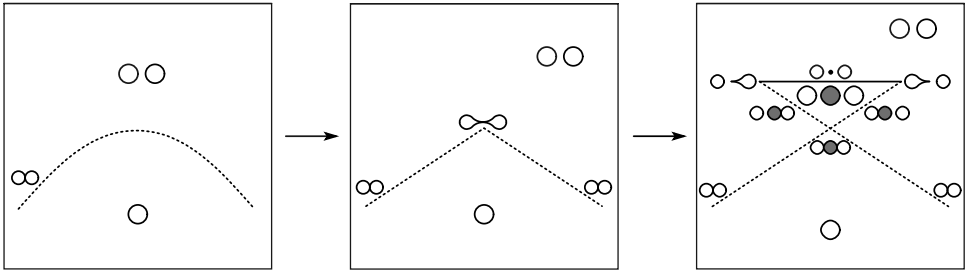}
\caption{I-swallowtail (see \cite[Figure~6(a) (4) and Figure~8(a) $\mathrm{III}^c$]{MR2205725})}\label{fig:s3}
\end{figure}

\begin{figure}
\includegraphics[width=\textwidth]{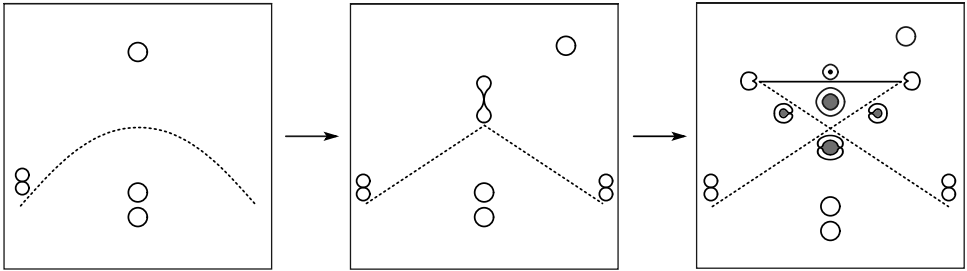}
\caption{I-swallowtail (see \cite[Figure~6(a) (4) and Figure~8(a) $\mathrm{III}^d$]{MR2205725})}\label{fig:s4}
\end{figure}

\begin{figure}
\includegraphics[width=\textwidth]{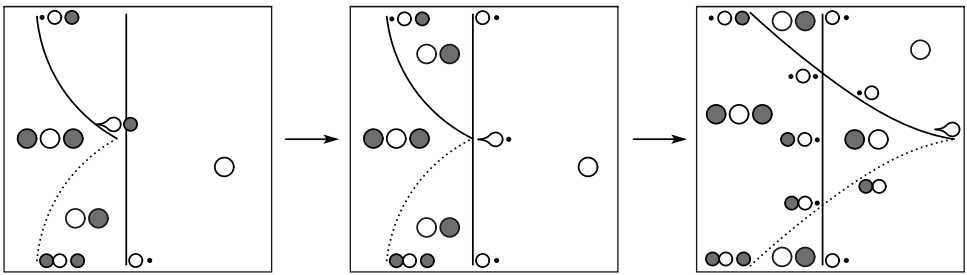}
\caption{cusp-plus-D fold (type 1) (see \cite[Figure~6(b) (5) and Figure~8(a) $\mathrm{III}_1^{0,a}$]{MR2205725})}\label{fig:s5}
\end{figure}

\begin{figure}
\includegraphics[width=\textwidth]{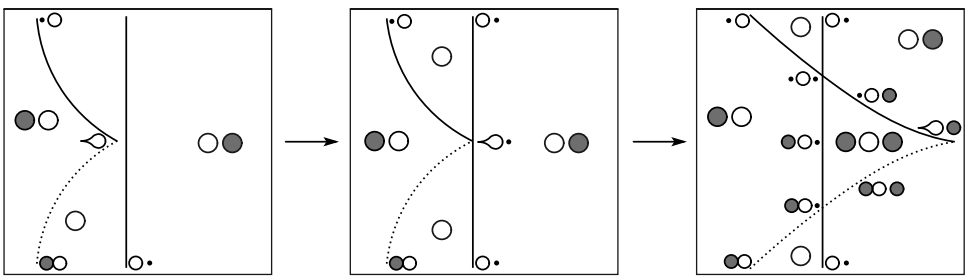}
\caption{cusp-plus-D fold (type 2) (see \cite[Figure~6(b) (6) and Figure~8(a) $\mathrm{III}_2^{0,a}$]{MR2205725})}\label{fig:s6}
\end{figure}

\begin{figure}
\includegraphics[width=\textwidth]{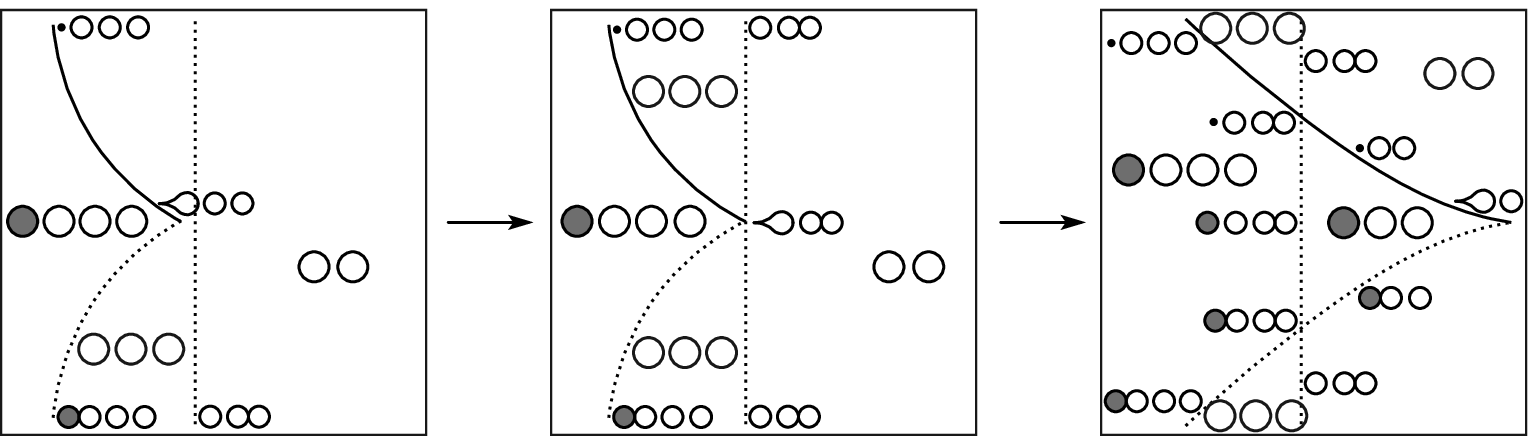}
\caption{cusp-plus-I fold (see \cite[Figure~6(b) (7) and Figure~8(a) $\mathrm{III}_1^{1,a}$]{MR2205725})}\label{fig:s7}
\end{figure}

\begin{figure}
\includegraphics[width=\textwidth]{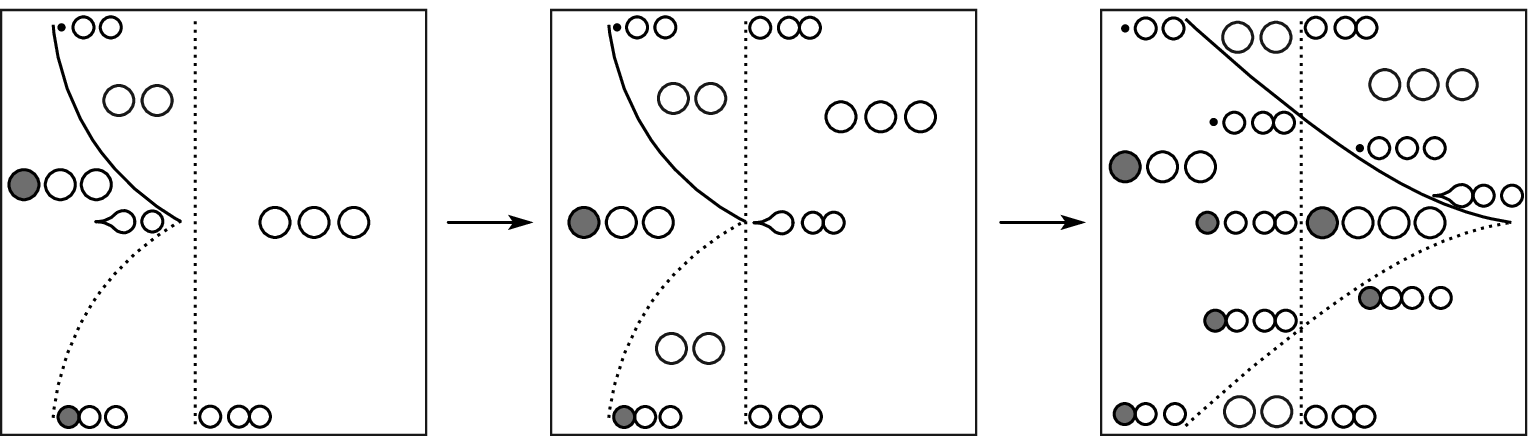}
\caption{cusp-plus-I fold (see \cite[Figure~6(b) (7) Figure~8(a) $\mathrm{III}_2^{1,a}$]{MR2205725}}\label{fig:s8}
\end{figure}

\begin{figure}
\includegraphics[width=\textwidth]{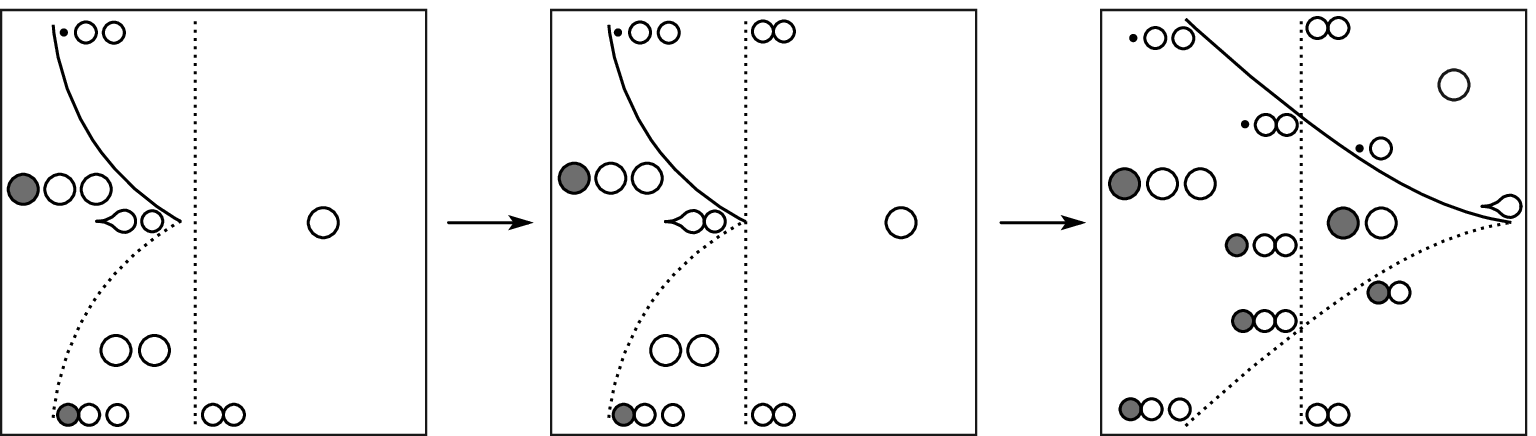}
\caption{cusp-plus-I fold (see \cite[Figure~6(b) (7) Figure~8(a) $\mathrm{III}_1^e$]{MR2205725}}\label{fig:s9}
\end{figure}

\begin{figure}
\includegraphics[width=\textwidth]{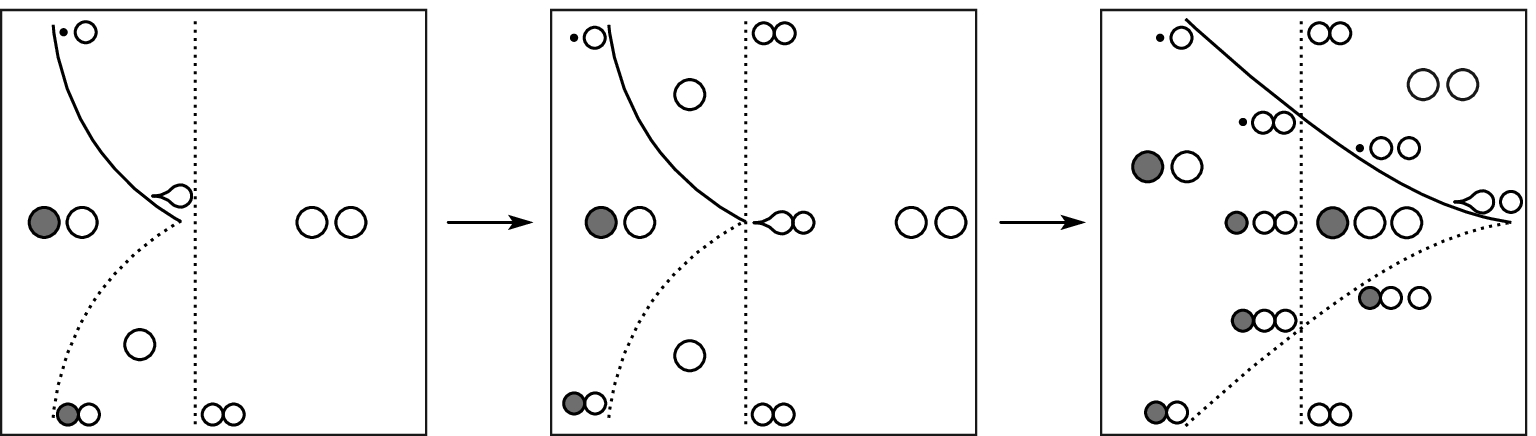}
\caption{cusp-plus-I fold (see \cite[Figure~6(b) (7) Figure~8(a) $\mathrm{III}_2^e$]{MR2205725}}\label{fig:s10}
\end{figure}

\begin{proof}
The ``only if'' part is nothing but 
Corollary~\ref{cor:byprod}. 

Suppose that $[L]=0\in H_1(M^3;\Z)$ and 
consider any orientation on $L$. 
In Saeki's construction, 
we can choose the initial stable map $f_{\mathit{init}}\co M^3\to\R^2$ 
(Remark~\ref{rmk:init}) 
so that it has an immersion lift in $\R^4$ (Remark~\ref{rmk:lift}). 
Take a good orientation on the singular set $S(f_{\mathit{init}})$ 
(see the beginning of this section). 
Then, 
$[S(f_{\mathit{init}})]=0\in H_1(M^3;\Z)$ 
by Corollary~\ref{cor:byprod}, and hence 
$S(f_{\mathit{init}})$ can be related to $L$ 
through a finite iteration of cohrent band surgeries 
by Proposition~\ref{prop:coherent}. 
Therefore, 
we only have to check that 
the coherent cases of (H1) in addition to (H2) and (H3) 
in \S\ref{subsect:saeki} 
keep the ``liftability''. 

To see this, 
we first list the all necessary local homotopies, 
including the information how 
regular fibres degenerate in crossing 
the image of the singular points and 
how the singular fibres 
are deformed during the homotopies 
(up to regular circle components). 
This is carried out in Figures~\ref{fig:s1}--\ref{fig:s10}, 
based on the classification given in \cite[Theorem~4.7]{MR2205725}. 
Our Figures~\ref{fig:s1}--\ref{fig:s10} just correspond to 
the eleven figures appearing in \cite[Figure~8(a)]{MR2205725} 
(the case $\mathrm{III}^a(l)$ corresponding to Lips 
is excluded). 

In Figures~\ref{fig:s1}--\ref{fig:s10}, we use 
the solid lines for definite folds 
and the dotted line for indefinite folds. 
Bigger circles represent the regular fibre 
over each point of the region and 
smaller circles drawn near the lines 
indicate how they are deformed and degenerate there. 
An explanation about the circles with shade is given below. 

What we want to show is that in 
each homotopy in Figures~\ref{fig:s1}--\ref{fig:s10} 
if we have an immersion lift in $\R^4$ at the 
beginning (the left) 
then we can construct an immersion lift in $\R^4$ 
at the end (the right) consistently. 
Actually we will prove more, that is, 
each generic homotopy above can be covered by 
a regular homotopy in $\R^4$, 
if it has an immersion lift in $\R^4$ 
at the beginning. 

Now suppose that we have an immersion lift in $\R^4$, 
in the left column of each figure. 
Then, the regular fibre over each point of 
the $2$-dimensional regions 
--- the disjoint union of copies of circles 
--- is immersed into $\R^2$ (that is 
a fibre of the projection $\R^4\to\R^2$). 
At this point we see that 
some fibre components, vanishing 
as they travel towards the definite folds, 
should be immersed into $\R^2$ 
with rotation number $\pm1$. 
In Figures~\ref{fig:s1}--\ref{fig:s10}, we shade such circles. 
Particularly in Figure~\ref{fig:s1}, 
the goodness of the orientation on $S(f)$ 
determines whether $+1$ or $-1$ should be chosen; 
the numbers in Figure~\ref{fig:s1} refer it (but what is 
important here is that 
the two shaded circles of the left column 
have the same number in each of the two cases 
of Figure~\ref{fig:s1}). 
Note that we do not know how 
other circles (with no shade) 
are immersed into $\R^2$. 

Thus, 
by focusing on those shaded circles, 
it turns out that we can construct an immersion lift 
for the right column, 
so that 
circles with shade are immersed trivially 
(with rotation number $\pm1$) 
and the other circles are immsersed 
in the inherited ways from the left column. 

For example, in the right column of Figure~\ref{fig:s5}, 
we need to determine how the two circles of 
the (newly generated) central region are immersed into $\R^2$; 
the figure shows that we can do this by immersing 
the shaded circle trivially and 
the other circle similarly to 
the circle of the rightmost region. 
The other figures can be likewise understood. 
However, Figure~\ref{fig:s4} seems a little complicated. 
In the right column of Figure~\ref{fig:s4}, it is obvious that 
the two circles in the bottom region should be 
immersed into $\R^2$ in the similar way as those of the left column. 
The shade of the singular fibre at the intersection of the lines 
indicates that these immersed circles, 
nearing the intersection of the singular lines, 
osculate at two points in such a way that 
we could span an immersed disk at the shaded potion. 
For the two circles of the central region, 
one should be immersed similarly as the circle of the top region 
and the other (with shade) should be immersed trivially. 
We can thus determine a consistent immersion lift 
for the right column. 
(In Figure~\ref{fig:s4}, all the shaded portions in the 
right column shrink to a point at the bifurcation point, 
that is, in the center column). 

Finally we see that the situations of the 
degenerations of fibres in the left and the right columns 
can be continuously connected via
the center column (the bifurcation point). 
Therefore, we could obtain 
a covering regular homotopy in $\R^4$
of each generic homotopy. 
\end{proof}

\begin{remark}\label{rmk:rhom}
For an immersion of $M^3$ in $\R^4$ 
the precomposition by a homeomorphism of $M^3$ does not 
change the regular homotopy class of the immersion 
(see \cite{MR0172302} and \cite[Remark~3.4]{MR2303517}). 
Therefore, 
in view of the last paragraph of the proof of Theorem~\ref{thm:ext}, 
the immersion lift of the resultant stable map and 
that of the initial stable map belong
to the same regular homotopy class.  
\end{remark}

\section{Knots and links of complex tangents}\label{sect:main}
We are now ready to state the main theorem. 

\begin{theorem}\label{thm:main}
Let $M^3$ be a closed orientable $3$-manifold and $L$ 
be a closed $1$-dimensional submanifold of $M^3$. 
Then, there exists a smooth embedding $F\co M^3\to\C^3$ 
the set of whose complex tangents coincides with $L$ 
if and only if $[L]=0\in H_1(M^3;\Z)$. 
\end{theorem}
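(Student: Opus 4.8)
For the ``only if'' part, one applies Theorem~\ref{thm:CT-homology}: given a smooth embedding $F\co M^3\to\C^3$ whose set of complex tangents is a closed $1$-dimensional submanifold $L$, a small generic perturbation turns $F$ into a generic immersion without changing the homology class carried by its complex tangent locus, so that Theorem~\ref{thm:CT-homology} yields $[L]=0$ in $H_1(M^3;\Z)$.

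For the ``if'' part, assume $[L]=0$ in $H_1(M^3;\Z)$ and fix an orientation of $L$. First I would apply Theorem~\ref{thm:ext} to produce a liftable stable map $f=(f_1,f_2)\co M^3\to\R^2$ with $S(f)=L$, together with an immersion lift $\widetilde{f}=(f_1,f_2,f_3,f_4)\co M^3\to\R^4$. Feeding $\widetilde f$ into Theorem~\ref{thm:imm} gives a smooth immersion
\[
G=(f_1,f_2,f_3,f_4,f_1,-f_2)\co M^3\longrightarrow\R^6=\C^3
\]
whose set of complex tangents is exactly $S(f)=L$; moreover, by Remark~\ref{rmk:trans} we may assume that $G$ restricts to an embedding on a tubular neighbourhood $N$ of $L$ (with $L\subset\operatorname{int}N$). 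In particular $G$ is totally real on the compact $3$-manifold-with-boundary $M'\coloneqq M^3\smallsetminus\operatorname{int}N$, $G$ restricts to an embedding near $\partial M'$, and every double point of $G$ lies in $\operatorname{int}M'$. What remains is to replace $G$ by an embedding without altering it near $N$, hence without altering the complex tangent locus $L$.

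To do this I would first show that $G$ is regularly homotopic, rel $N$, to an embedding: since $6=2\cdot3$ lies in the Whitney range and $M'$ collapses onto a $2$-complex, this holds after, if necessary, an innocuous local modification of $G$ inside a small ball in $\operatorname{int}M'$ (which may temporarily introduce extra complex tangents in that ball). Concretely, such a modification adjusts the single $\Z$-valued self-intersection obstruction in $H^{3}(M^3,N;\Z)$ to zero, after which Whitney's trick cancels all double points in pairs. Next I would invoke the relative form of the $h$-principle for totally real embeddings of Gromov \cite{MR864505} and Forstneri\v{c} \cite{MR880125} --- whose proof is precisely a totally real refinement of Whitney's trick --- in the following shape: a compact $3$-manifold $M'$ that is totally really immersed near $\partial M'$, carries a formal totally real immersion datum (such a datum exists here, for instance the one underlying $G|_{M'}$ before the local modification), and whose underlying map is isotopic rel $\partial M'$ to an embedding into the open manifold $\C^3\smallsetminus G(N)$, is in fact isotopic rel $\partial M'$ to a totally real embedding into $\C^3\smallsetminus G(N)$. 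Applying this to $G|_{M'}$ produces a totally real embedding $F'\co M'\to\C^3\smallsetminus G(N)$ agreeing with $G$ near $\partial M'$; gluing $F'$ to $G|_N$ along $\partial N$ yields a smooth embedding $F\co M^3\to\C^3$ whose complex tangents are exactly those of $G|_N$, namely $L$.

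The step I expect to be the main obstacle is the first one above: checking that, after a harmless local modification away from $N$, the immersion $G$ really is formally isotopic rel $N$ to an embedding --- so that its double points cancel in pairs --- and arranging the Whitney discs together with the subsequent totally real regular homotopy to remain in $\C^3\smallsetminus G(N)$, so that the glued map $F$ is globally injective rather than merely immersed. This rests on obstruction theory for the self-intersection invariant in the Whitney range, using the freedom in the choice of immersion lift (Remarks~\ref{rmk:lift} and \ref{rmk:rhom}), together with a careful relative application of the Gromov--Forstneri\v{c} $h$-principle that keeps the construction disjoint from $G(N)$.
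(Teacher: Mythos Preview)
Your outline matches the paper's closely: obtain a liftable stable map with $S(f)=L$ via Theorem~\ref{thm:ext}, feed it into Theorem~\ref{thm:imm} to get an immersion $G$ with complex tangents exactly $L$, arrange $G$ to be an embedding on a tube $N$ about $L$ (Remark~\ref{rmk:trans}), and then invoke the relative $h$-principle for totally real embeddings to upgrade $G$ to an embedding while fixing it on $N$. The divergence lies precisely in the step you flag as the main obstacle --- ensuring that $G$ is regularly homotopic to an embedding so that the $h$-principle applies.

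The paper avoids your local-modification manoeuvre entirely by controlling the \emph{initial} stable map in Saeki's construction. It starts from an embedding $G'\co M^3\hookrightarrow\R^6$ and uses the Compression Theorem to arrange that the projection $\R^6\to\R^4$ restricts to an immersion $\widetilde{f'}$; a further generic projection $\R^4\to\R^2$ then gives the initial liftable stable map $f'$. The proof of Theorem~\ref{thm:ext} is set up so that each generic homotopy used is covered by a regular homotopy of immersion lifts, and Remark~\ref{rmk:rhom} guarantees that the final lift $\widetilde{f}$ is regularly homotopic in $\R^4$ to $\widetilde{f'}$. Hence $G=(\widetilde{f},f_1,-f_2)$ is regularly homotopic in $\R^6$ to $j\circ\widetilde{f'}$, and therefore to the embedding $G'$. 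With this in hand, $G$ is already genuinely (not merely formally) totally real on $M^3\smallsetminus N$ and regularly homotopic to an embedding, so the relative $h$-principle applies without further ado.

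Your route --- take any lift, then kink in a ball to kill the self-intersection number --- may well be salvageable, but it creates real work you have not done: after the kink the modified immersion need not be totally real on $M'$, so you must show its Gauss map can still be homotoped rel $\partial M'$ into the totally real locus, which amounts to an obstruction computation in $\pi_3$ of the pair $(V_{6,3},GL(3,\C))$; and you must also argue that the Whitney discs and the ensuing totally real regular homotopy can be kept disjoint from $G(N)$. The paper's choice of initial data via the Compression Theorem and the tracking of regular homotopy classes in Remark~\ref{rmk:rhom} are exactly the missing ingredients that make these issues disappear.
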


\begin{proof}
The ``only if'' part follows from 
Theorem~\ref{thm:CT-homology}. 
Suppose that $[L]=0\in H_1(M^3;\Z)$. 

By Whitney's theorem, 
we can 
embed $M^3$ into $\R^6$ such that it has 
the trivial normal bundle. 
Furthermore, by the Compression Theorem \cite{MR1833749}, 
such an embedding $G'\co M^3\to \R^6$ 
can be chosen so that 
its composition with the projection to $\R^4$ 
becomes an immersion, which we denote 
by $\widetilde{f'}\co M^3\to \R^4$. 
By further composing $\widetilde{f'}$ with a generic projection $\R^4\to \R^2$, 
we obtain a stable map, denoted by $f'=(f'_1, f'_2)\co M^3\to \R^2$. 

We have thus obtained the liftable stable map 
$f'=(f'_1, f'_2)\co  M^3\to\R^2$ with the immersion lift 
$\widetilde{f'}=(f'_1,f'_2,f'_3,f'_4)\co M^3\to\R^4$, whose 
composition $j\circ \widetilde{f'}$ with the inclusion 
$j\co \R^4\to\R^6$ is regularly homotopic to 
the embedding $G'\co M^3\to\R^6$. 

Since $[L]=0$, 
by using this stable map $f'$ as the initial stable map 
(see Remark~\ref{rmk:init}) in the proof of 
Theorem~\ref{thm:ext}, 
we obtain a stable map $f$ which satisfies $S(f)=L$ 
and has an immersion lift 
$\widetilde{f}=(f_1,f_2,f_3,f_4)\co M^3\to\R^4$. 
By Remark~\ref{rmk:rhom}, furthermore, 
we see that $\widetilde{f}$ is 
regularly homotopic to $\widetilde{f'}$. 

By Theorem~\ref{thm:imm}, we obtain 
the immersion $G''=(f_1, f_2, f_3, f_4, f_1, -f_2)$ 
whose complex tangents forms $S(f)=L$. 
Then, it is clear that $G''$ 
is regularly homotopic to 
the embedding $G'\co M^3\to \R^6$. 
By Remark~\ref{rmk:trans}, we may assume that 
the immersion $G''$ is already an embedding on 
a tubular neighbourhood $N$ of $L$. 
Furthermore, 
since the condition of totally reality is an open condition, 
by slightly perturbing $G''$ on $M^3\smallsetminus N$, 
we obtain 
a new immersion $G$ which 
has only transverse double points away from $N$ and 
is still totally real on $M^3\smallsetminus N$. 
Thus, the self-transverse immersion $G$ has only 
isolated double points lying in $M^3\smallsetminus N$ and 
the set of complex tangents of $G$ coincides with $L$. 

Finally we apply to $G\restr_{M^3\smallsetminus N}$ 
the relative $h$-principle for 
totally real embeddings 
(see Gromov \cite{MR864505}, Eliashberg and Mishachev \cite{MR1909245}, 
and Forstneri{\v{c}} \cite{MR880125} for example). 
Since the immersion $G$ 
is regularly homotopic to an embedding, 
is totally real on $M^3\smallsetminus L$, 
and is already an embedding on $N$, 
we can find a smooth embedding $F\co M^3\to\C^3$ 
such that $F\restr_{M^3\smallsetminus L}$ 
is totally real 
and $F\restr_N=G\restr_N$ 
by using the relative $h$-principle. 
Thus $F\co M^3\to\C^3$ 
is the desired embedding with 
complex tangents forming $L$. 
\end{proof}

\begin{corollary}\label{cor:main}
Every knot or link in $S^3$ can be realized as the 
set of complex tangents of a smooth 
embedding $F\co S^3\to\C^3$.
\end{corollary}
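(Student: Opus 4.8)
The plan is to obtain this as an immediate specialisation of the main theorem. The point is that the $3$-sphere has trivial first integral homology, $H_1(S^3;\Z)=0$, so that \emph{every} closed $1$-dimensional submanifold $L\subset S^3$ — in particular every knot and every link — automatically represents the trivial class in $H_1(S^3;\Z)$. Hence the homological hypothesis appearing in Theorem~\ref{thm:main} is vacuously satisfied when $M^3=S^3$, and there is no condition left to check.

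Concretely, the one step to carry out is: given a knot or link $L\subset S^3$, apply Theorem~\ref{thm:main} with $M^3=S^3$ and this $L$. The theorem then produces a smooth embedding $F\co S^3\to\C^3$ whose set of complex tangents is exactly $L$, which is precisely the assertion of the corollary. (If one prefers an explicit null-homology rather than quoting $H_1(S^3;\Z)=0$, one may simply note that any link in $S^3$ bounds a Seifert surface, but this is not needed.)

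I do not anticipate any obstacle here: all the genuine work — the refinement of Saeki's theorem in Theorem~\ref{thm:ext}, the lifting gimmick of Theorem~\ref{thm:imm}, and the removal of double points via the relative $h$-principle in the proof of Theorem~\ref{thm:main} — has already been done for an arbitrary closed orientable $3$-manifold, and $S^3$ is the most favourable special case. It is worth remarking, though, that this corollary yields a genuinely $C^\infty$-smooth realisation with no degenerate point, and in particular it removes the knot-versus-two-component-unlink ambiguity that was unavoidable in the earlier approach discussed in \S\ref{sect:intro}.
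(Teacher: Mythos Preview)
Your proposal is correct and matches the paper's treatment: the corollary is stated immediately after Theorem~\ref{thm:main} with no separate proof, precisely because it is the specialisation $M^3=S^3$ together with $H_1(S^3;\Z)=0$. Your added remarks about smoothness and the removal of the knot-versus-two-copies ambiguity are also accurate and echo the discussion in \S\ref{sect:intro} and \S2.
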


\begin{acknowledgements}
The authors would like to thank Professor Minoru Yamamoto 
for many helpful comments 
on generic homotopy of stable maps. 
The second-named author has been 
supported in part by the Grant-in-Aid for Scientific Research (C), 
(No.~15K04880), Japan Society for the Promotion of Science. 
\end{acknowledgements}


\begin{thebibliography}{10}

\bibitem{MR787894}
Patrick Ahern and Walter Rudin.
\newblock Totally real embeddings of {$S^3$} in {${\bf C}^3$}.
\newblock {\em Proc. Amer. Math. Soc.}, 94(3):460--462, 1985.

\bibitem{MR639356}
S.~Akbulut and H.~King.
\newblock All knots are algebraic.
\newblock {\em Comment. Math. Helv.}, 56(3):339--351, 1981.

\bibitem{MR966952}
Mich{\`e}le Audin.
\newblock Fibr\'es normaux d'immersions en dimension double, points doubles
  d'immersions lagragiennes et plongements totalement r\'eels.
\newblock {\em Comment. Math. Helv.}, 63(4):593--623, 1988.

\bibitem{MR0200476}
Errett Bishop.
\newblock Differentiable manifolds in complex {E}uclidean space.
\newblock {\em Duke Math. J.}, 32:1--21, 1965.

\bibitem{chincaro}
Eduardo Chincaro.
\newblock {\em Bifurcations of Whitney Maps}.
\newblock PhD thesis, Tese de doutorado, IMPA.

\bibitem{MR1360632}
A.~V. Domrin.
\newblock A description of characteristic classes of real submanifolds in
  complex manifolds in terms of {RC}-singularities.
\newblock {\em Izv. Ross. Akad. Nauk Ser. Mat.}, 59(5):19--40, 1995.

\bibitem{MR2928578}
Ali~M. Elgindi.
\newblock On the topological structure of complex tangencies to embeddings of
  {$S^3$} into {$\Bbb C^3$}.
\newblock {\em New York J. Math.}, 18:295--313, 2012.

\bibitem{MR3345506}
Ali~M. Elgindi.
\newblock A topological obstruction to the removal of a degenerate complex
  tangent and some related homotopy and homology groups.
\newblock {\em Internat. J. Math.}, 26(5):1550025, 16, 2015.

\bibitem{Ali_pre}
Ali~M. Elgindi.
\newblock Totally real perturbations and non-degenerate embeddings of {$S^3$}.
\newblock {\em New York J. Math.}, 21:1283--1293, 2015.

\bibitem{MR1909245}
Y.~Eliashberg and N.~Mishachev.
\newblock {\em Introduction to the {$h$}-principle}, volume~48 of {\em Graduate
  Studies in Mathematics}.
\newblock American Mathematical Society, Providence, RI, 2002.

\bibitem{MR1068451}
C.~Ernst and D.~W. Sumners.
\newblock A calculus for rational tangles: applications to {DNA} recombination.
\newblock {\em Math. Proc. Cambridge Philos. Soc.}, 108(3):489--515, 1990.

\bibitem{MR880125}
Franc Forstneri{\v{c}}.
\newblock On totally real embeddings into {${\bf C}^n$}.
\newblock {\em Exposition. Math.}, 4(3):243--255, 1986.

\bibitem{MR1177310}
Franc Forstneri{\v{c}}.
\newblock Complex tangents of real surfaces in complex surfaces.
\newblock {\em Duke Math. J.}, 67(2):353--376, 1992.

\bibitem{MR864505}
Mikhael Gromov.
\newblock {\em Partial differential relations}, volume~9 of {\em Ergebnisse der
  Mathematik und ihrer Grenzgebiete (3) [Results in Mathematics and Related
  Areas (3)]}.
\newblock Springer-Verlag, Berlin, 1986.

\bibitem{MR0116357}
Andr{\'e} Haefliger.
\newblock Quelques remarques sur les applications diff\'erentiables d'une
  surface dans le plan.
\newblock {\em Ann. Inst. Fourier. Grenoble}, 10:47--60, 1960.

\bibitem{MR790729}
Le{\'o}n Kushner, Harold Levine, and Paulo Porto.
\newblock Mapping three-manifolds into the plane. {I}.
\newblock {\em Bol. Soc. Mat. Mexicana (2)}, 29(1):11--33, 1984.

\bibitem{MR0314066}
Hon~Fei Lai.
\newblock Characteristic classes of real manifolds immersed in complex
  manifolds.
\newblock {\em Trans. Amer. Math. Soc.}, 172:1--33, 1972.

\bibitem{MR814689}
Harold Levine.
\newblock {\em Classifying immersions into {${\bf R}^4$} over stable maps of
  {$3$}-manifolds into {${\bf R}^2$}}, volume 1157 of {\em Lecture Notes in
  Mathematics}.
\newblock Springer-Verlag, Berlin, 1985.

\bibitem{MR1101845}
Harold Levine.
\newblock Stable mappings of {$3$}-manifolds into the plane.
\newblock In {\em Singularities ({W}arsaw, 1985)}, volume~20 of {\em Banach
  Center Publ.}, pages 279--289. PWN, Warsaw, 1988.

\bibitem{MR0293670}
J.~N. Mather.
\newblock Stability of {$C^{\infty }$} mappings. {VI}: {T}he nice dimensions.
\newblock In {\em Proceedings of {L}iverpool {S}ingularities-{S}ymposium, {I}
  (1969/70)}, pages 207--253. Lecture Notes in Math., Vol. 192. Springer,
  Berlin, 1971.

\bibitem{MR0362393}
John~N. Mather.
\newblock Generic projections.
\newblock {\em Ann. of Math. (2)}, 98:226--245, 1973.

\bibitem{MR1833749}
Colin Rourke and Brian Sanderson.
\newblock The compression theorem. {I}.
\newblock {\em Geom. Topol.}, 5:399--429 (electronic), 2001.

\bibitem{MR1359844}
Osamu Saeki.
\newblock Constructing generic smooth maps of a manifold into a surface with
  prescribed singular loci.
\newblock {\em Ann. Inst. Fourier (Grenoble)}, 45(4):1135--1162, 1995.

\bibitem{MR1396772}
Osamu Saeki.
\newblock Simple stable maps of {$3$}-manifolds into surfaces.
\newblock {\em Topology}, 35(3):671--698, 1996.

\bibitem{MR3153918}
Osamu Saeki and Masamichi Takase.
\newblock Desingularizing special generic maps.
\newblock {\em J. G\"okova Geom. Topol. GGT}, 7:1--24, 2013.

\bibitem{sotomayor}
J.~Sotomayor.
\newblock Bifurcation of whitney maps {$\mathbf{R}^n\to\mathbf{R}^2$} and
  critical pareto sets.
\newblock In {\em Proceedings of a {S}ymposium titled ``{A}pplications of
  {T}opology and {D}ynamical {S}ystems'' held at the {U}niversity of {W}arwick,
  {C}oventry, 1973/1974}.

\bibitem{MR2303517}
Masamichi Takase.
\newblock An {E}kholm-{S}z{\H u}cs-type formula for codimension one immersions
  of 3-manifolds up to bordism.
\newblock {\em Bull. Lond. Math. Soc.}, 39(1):39--45, 2007.

\bibitem{MR0087149}
R.~Thom.
\newblock Les singularit\'es des applications diff\'erentiables.
\newblock {\em Ann. Inst. Fourier, Grenoble}, 6:43--87, 1955--1956.

\bibitem{MR800003}
S.~M. Webster.
\newblock The {E}uler and {P}ontrjagin numbers of an {$n$}-manifold in {${\bf
  C}^n$}.
\newblock {\em Comment. Math. Helv.}, 60(2):193--216, 1985.

\bibitem{MR0172302}
Wen-ts{\"u}n Wu.
\newblock On the immersion of {$C^{\infty }$}-{$3$}-manifolds in a {E}uclidean
  space.
\newblock {\em Sci. Sinica}, 13:335--336, 1964.

\bibitem{MR2205725}
Minoru Yamamoto.
\newblock First order semi-local invariants of stable maps of 3-manifolds into
  the plane.
\newblock {\em Proc. London Math. Soc. (3)}, 92(2):471--504, 2006.

\end{thebibliography}
%

\end{document}